\newtheorem{teo}{Theorem}[section]
\newtheorem{cor}[teo]{Corollary}
\newtheorem{lem}[teo]{Lemma}
\newtheorem{conj}[teo]{Conjecture}
\newtheorem{pro}[teo]{Proposition}
\theoremstyle{definition}
\newtheorem{defi}[teo]{Definition}
\newtheorem{rem}[teo]{Remark}
\newtheorem{ex}[teo]{Example}}
\newtheorem*{balreira}{\cite[Theorem 2.3]{B}}
\newcommand{\R}{\mathbb R}
\newcommand{\C}{\mathbb C}
\newcommand{\N}{\mathbb N}
\newcommand{\K}{\mathbb K}
\newcommand{\spec}{\operatorname{{Spec}}}
\newcommand{\codim}{\operatorname{{codim}}}
\newcommand{\vazio}{\varnothing}
\newcommand{\eps}{\varepsilon}
\newcommand{\F}{\mathcal{F}}
\newcommand{\Imm}{\operatorname{{Im}}}
\def\Sing{{\rm Sing}}
\newcommand{\cum}{i_1}
\newcommand{\cumk}{i_k}
\newcommand{\cki}[1]{i_{#1}}
\newcommand{\cset}{\{i_1,\ldots,i_{n-2}\}}
\newcommand{\ckif}[1]{F_{i_{#1}}}
\title[On global invertibility of semi-algebraic local diffeomorphisms]{On global invertibility of semi-algebraic local diffeomorphisms}
\author[F. Braun, L.R.G. Dias \MakeLowercase{and} J. Venato-Santos]
{Francisco Braun$^*$, Luis Renato Gon\c{c}alves Dias$^\dagger$ \MakeLowercase{and} Jean Venato-Santos$^\ddagger$}
\address{$^*$ Departamento de Matem\'{a}tica, Universidade Federal de S\~ao Carlos, 
13565-905 S\~ao Carlos, S\~ao Paulo, Brazil} 
\email{franciscobraun@dm.ufscar.br}
\address{$^{\dagger,\ddagger}$ Faculdade de Matem\'{a}tica, Universidade Federal de Uberl\^{a}ndia, 
	38408-100 Uber\-l\^{a}n\-dia, Minas Gerais, Brazil} 
\email{lrgdias@ufu.br}
\email{jvenatos@ufu.br}
\subjclass[2010]{Primary: 14R15; Secondary: 35F05, 35A30.}
\keywords{Jacobian conjecture; global injectivity; locally trivial fibrations; regularity conditions at infinity.}
\thanks{$^*$ Partial support provided by the grants 2017/00136-0 and 2019/07316-0 of the S\~ao Paulo Research Foundation (FAPESP)}
\thanks{$^\dagger$ Partial support provided by the Fapemig-Brazil Grant APQ-00431-14 and the CNPq-Brazil grants 401251/2016-0 and 304163/2017-1}
\thanks{$^\ddagger$ Partial support provided by the Fapemig-Brazil Grant APQ-00595-14 and the CNPq-Brazil Grant 446956/2014-7}
\begin{document}

\begin{abstract}
In this partly expository paper we discuss conditions for the global injectivity of $C^2$ semi-algebraic local diffeomorphisms $f\colon \R^n \to \R^n$. 
In case $n>2$, we consider the foliations of $\R^n$ defined by the level sets of each $n-2$ projections of $f$, i.e., the maps $\R^n \to \R^{n-2}$ obtained by deleting two coordinate functions of $f$. 
It is known that if the set of non-proper points of $f$ has codimension greater than or equal to $2$ and the leaves of the above-defined foliations are simply connected, then $f$ is bijective. 
In this work we relate this simply connectedness with the notion of locally trivial fibrations. 
Then some computable regularity conditions at infinity ensuring such simply connectedness are presented. 
Further, we provide an equivalent statement of the Jacobian conjecture by using fibrations. 
By means of examples we prove that the results presented here are different from a previous result based on a spectral hypothesis.
Our considerations are also applied to discuss the behaviour of some conditions when $f$ is composed with linear isomorphisms: this is relevant due to some misunderstandings appearing in the literature. 
\end{abstract}

\maketitle

\section{Introduction}

We present conditions in order that a $C^k$ local diffeomorphism $f\colon\R^n\to\R^n$ is a global diffeomorphism. This problem and many such conditions appear in different areas of Mathematics, see for instance \cite{bcw,essen,P} and their references. 
Here we discuss conditions related to Algebraic Geometry and Singularity Theory.  

The first aim of this work is to consider global injectivity results for a class of semi-algebraic local diffeomorphisms $f\colon\R^n\to\R^n$, for $n>2$. 
These are related with Conjecture \ref{JeCo} below. 

We begin introducing some necessary terminology. Let $X$ and $Y$ be locally compact spaces. 
We say that a continuous map $f\colon X\to Y$ is \emph{proper at} $y\in Y$ if there exists a neighborhood $U$
of $y$ such that the set $f^{-1}\left(\overline{U}\right)$ is
compact. 
We denote by $S_f$ the set of points where $f$
is not proper. 
In the case of semi-algebraic maps, it is known that $S_f$ is a semi-algebraic set and hence it makes sense to consider its dimension. 

\begin{conj}[Jelonek's real Jacobian conjecture \cite{J}]\label{JeCo}
	Let $f\colon\R^n\to\R^n$ be a polynomial map with nowhere zero Jacobian determinant. 
	If $\codim(S_f)\geq 2$, then $f$ is bijective. 
\end{conj}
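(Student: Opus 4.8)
The plan is to derive the conjecture from the criterion recalled in the introduction: since $\codim(S_f)\ge 2$ is already assumed, it suffices to prove that for every way of deleting two coordinate functions of $f$ the leaves of the resulting foliation of $\R^n$ are simply connected, after which bijectivity follows from \cite[Theorem 2.3]{B}. In this way the entire problem is reduced to a statement about the foliations attached to the projections $\R^n\to\R^{n-2}$, with the nonvanishing Jacobian serving to guarantee that these are regular two-dimensional foliations. First I would fix one such projection and, after relabeling the coordinates, write $f=(F,g)$ with $F=(f_1,\dots,f_{n-2})\colon\R^n\to\R^{n-2}$ and $g=(f_{n-1},f_n)\colon\R^n\to\R^2$. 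Because $DF$ consists of $n-2$ rows of the everywhere invertible matrix $Df$, it has constant rank $n-2$, so $F$ is a submersion, each nonempty level set $F^{-1}(c)$ is a smooth embedded surface, and the leaves are the connected components of these surfaces.

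Next I would reduce the simple connectedness of a fixed leaf $L\subset F^{-1}(c)$ to a properness statement. The tangent space to $L$ is $\ker DF$, and invertibility of $Df$ forces $Dg$ to restrict to an isomorphism $\ker DF\to\R^2$; hence $g|_L\colon L\to\R^2$ is a local diffeomorphism. If one shows that $g|_L$ is proper, then it is a covering map onto $\R^2$, and since $\R^2$ is simply connected and $L$ is connected, this covering has a single sheet, so $g|_L$ is a global diffeomorphism and $L\cong\R^2$ is simply connected. Thus the conjecture would follow once the maps $g|_L$ are shown to be proper for every leaf of every such foliation.

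Everything therefore hinges on this fiberwise properness, and I would try to extract it from $\codim(S_f)\ge 2$. The natural link is that a leaf $L$ along which $g|_L$ fails to be proper carries a sequence $x_k\in L$ with $|x_k|\to\infty$ and $g(x_k)$ bounded; then $f(x_k)=(c,g(x_k))$ is bounded, and any accumulation point of $f(x_k)$ lies in $S_f$, inside the slice $\{c\}\times\R^2$. The genuine obstacle, and the reason the statement is only a conjecture, is that this argument places a priori only low-dimensional pieces of $S_f$ in each such slice, so a direct fiberwise reading of $\codim(S_f)\ge 2$ does not by itself force every individual leaf to be simply connected: one has to convert a global, essentially generic, codimension bound into a uniform topological conclusion over all leaves. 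I would attack this by upgrading the escaping sequences to semi-algebraic arcs via the curve selection lemma and analysing how they vary with $c$, aiming to show through Jelonek's structure theorem for the set of non-proper values that a persistent family of non-proper leaves would accumulate into a component of $S_f$ of dimension $n-1$, contradicting the hypothesis. Carrying out this reduction unconditionally, rather than replacing it by the explicit regularity-at-infinity conditions used elsewhere in this paper, is precisely the hard step that would settle the conjecture.
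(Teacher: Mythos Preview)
The statement you are addressing is not a theorem in the paper but an open \emph{conjecture} (Conjecture~\ref{JeCo}); the paper does not contain a proof of it, and indeed explains that Jelonek proved it only under the stronger hypothesis $\codim(S_f)\ge 3$ and in dimension $n=2$. So there is no ``paper's own proof'' to compare against, and your proposal should be read not as a proof but as a heuristic outline.

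As an outline it is honest: you yourself isolate the genuine gap. The reduction you describe --- show that every leaf $L$ of each foliation $\F_{F_{i_{n-2}}}$ is simply connected, then invoke the criterion of the introduction --- is exactly the content of Theorem~\ref{teo:FMS} (note: you cite \cite[Theorem~2.3]{B}, but that is Balreira's integral-condition result and has nothing to do with this; the result you mean is Theorem~\ref{teo:FMS} of Fernandes--Maquera--Venato-Santos). Your further reduction, that $g|_L$ is a local diffeomorphism to $\R^2$ and hence a diffeomorphism as soon as it is proper, is correct and standard. The remaining step --- deducing properness of every $g|_L$ from $\codim(S_f)\ge 2$ --- is precisely the unresolved content of the conjecture. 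Your sketch (curve selection, semi-algebraic variation in $c$, forcing a hypersurface component of $S_f$) is a plausible line of attack, but the argument as written does not carry it out: a single non-proper leaf produces only a point of $S_f$ in the slice $\{c\}\times\R^2$, and there is no mechanism provided to propagate this to an $(n-1)$-dimensional family. Until that propagation is made rigorous, the proposal remains a strategy, not a proof.
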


In \cite{J}, Jelonek proved that if his conjecture is true in dimension $2n$ then so is the famous 

\begin{conj}[Jacobian conjecture \cite{K}]\label{JaCo}
	Let $f\colon\C^n\to\C^n$ be a polynomial map with nowhere zero Jacobian determinant. 
	Then $f$ is an automorphism (bijective with $f^{-1}$ a polynomial map).
\end{conj}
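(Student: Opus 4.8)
The statement is the Jacobian conjecture, so what follows is a program rather than a complete argument; I will indicate where it would draw on the machinery developed later in the paper and where every such program currently breaks down. The plan is to reduce the complex problem to a real global-injectivity question of exactly the type governed by the injectivity criterion recalled in the introduction. First I would view a complex polynomial map $f\colon\C^n\to\C^n$ with $\det Df$ nowhere zero as a real polynomial map $F\colon\R^{2n}\to\R^{2n}$; the Cauchy--Riemann equations give $\det DF=|\det Df|^2$, which is nowhere zero, so $F$ is a real semi-algebraic local diffeomorphism. Since bijectivity of $F$ forces $f$ to be bijective, and a bijective complex polynomial map with nonvanishing Jacobian is automatically a polynomial automorphism, it suffices to prove that $F$ is bijective. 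Crucially, I would attempt this not by assuming Conjecture~\ref{JeCo}, but by directly verifying, for the complexified $F$, the two hypotheses of the injectivity theorem quoted in the abstract.

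The first hypothesis to establish is $\codim(S_F)\geq 2$. Here I would invoke the algebraic description of the non-proper locus of a dominant polynomial map: $S_F$ is either empty or a hypersurface, and a codimension-one component appears exactly when the map drops multiplicity along a divisor at infinity. The goal is to show that the holomorphic origin of $F$, together with $\det DF>0$ everywhere, obstructs such a divisor, so that $S_F$ is empty or of codimension at least $2$.

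The second, and decisive, hypothesis is that the leaves of each foliation of $\R^{2n}$ obtained by deleting two of the $2n$ component functions of $F$ are simply connected. Following the later sections of the paper, I would replace ``simply connected leaves'' by the requirement that each such projection $\R^{2n}\to\R^{2n-2}$ be a locally trivial fibration, and then try to force local triviality by checking computable regularity conditions at infinity on the finitely many projections. This would reduce the entire conjecture to a finite checklist of conditions at infinity attached to the holomorphic data of $f$.

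The main obstacle is precisely this last reduction, and it is the reason the conjecture remains open: there is no known mechanism that compels the coordinate projections of an arbitrary Jacobian map $F$ to be locally trivial fibrations, nor that controls $\codim(S_F)$, in the absence of an extra assumption such as the spectral condition discussed later in the paper. Any honest completion of the plan must supply such a mechanism, which lies beyond present methods; what this paper actually contributes on this point is the equivalent fibration reformulation of the conjecture that isolates exactly this missing ingredient.
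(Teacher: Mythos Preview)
The statement you were asked to prove is the Jacobian conjecture itself, which the paper presents as Conjecture~\ref{JaCo} and for which it offers no proof; the conjecture remains open. You correctly recognize this and present a program rather than an argument, which is the only honest thing to do here.

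Your program essentially reconstructs the content of Proposition~\ref{pro:equiv}, which the paper \emph{does} prove: the Jacobian conjecture for $f\colon\C^m\to\C^m$ is equivalent to the statement that, for the real representation $f_{\R}\colon\R^{2m}\to\R^{2m}$, every $(2m-2)$-fold coordinate projection $F_{i_{n-2}}$ is a locally trivial fibration. One point in your outline deserves correction, however. You list $\codim(S_{f_{\R}})\geq 2$ as a hypothesis still to be secured, and in your final paragraph you name it alongside the fibration condition as an open obstruction. It is not: as the paper observes in the proof of Proposition~\ref{pro:equiv}, Jelonek's results \cite{J93,J99} show that for a complex polynomial map with nowhere zero Jacobian determinant the set $S_f$ is either empty or of complex codimension~$1$, so $S_{f_{\R}}$ automatically has real codimension at least~$2$. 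Thus the entire difficulty is concentrated in the fibration (equivalently, simply-connected-leaves) condition, and your program reduces precisely to the equivalence the paper already records. Your closing sentence captures this accurately; the preceding paragraph overstates what is missing on the non-proper locus side.
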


Jelonek \cite{J} also proved that Conjecture \ref{JeCo} is true under the additional assumption $\codim(S_f)\geq3$, as well as that it is true for $n=2$. 
We investigate this conjecture under the milder assumption that $f$ is semi-algebraic. 
Following Jelonek's proof, it is possible to conclude that semi-algebraic maps $f$ with nowhere zero Jacobian determinant and such that $\codim(S_f) \geq 3$ are globally injective (here the surjective may not be true as injective semi-algebraic maps need not be surjective). 
Moreover, Jelonek's conjecture remains true for semi-algebraic maps in dimension $n=2$, and we present a proof in Section \ref{sec:results}. 

We also remark that the known Pinchuk's counterexample \cite{Pi} to the so called \emph{real Jacobian conjecture}, i.e., a non injective polynomial map $p\colon\R^2\to\R^2$ with nowhere zero Jacobian determinant, satisfies $\codim \left(S_p\right) =  1$. 
Hence the assumption $\codim \left(S_f\right) \geq 2$ is necessary and it remains to investigate Conjecture \ref{JeCo} in dimension $n\geq3$. 

In \cite{FMS}, Fernandes, Maquera and the third named author of the present paper obtained the following topological result related to Conjecture \ref{JeCo}. 
Before the statement of the result we need some notation. 

For a given continuous submersion $g: \R^n \to \R^m$, $n>m$, we denote by $\F_g$ the foliation of $\R^n$ of codimension $m$ induced by the level sets of $g$. 
In the special case $m=n-2$, for each $k = 1, 2, \ldots, n-2$, we denote by $G_k\colon \R^n\to\R^{k}$ the submersion defined by $G_k=\left(g_1, g_2,\ldots,g_k\right)$. 
When the leaves of the two-dimensional foliation $\F_g = \F_{G_{n-2}}$ are simply connected or, equivalently, homeomorphic to $\R^2$, we say that \emph{$\F_{g}$ is a foliation by planes}. 
Finally, for a given positive integer $n$, we denote by $C_{n-2}$ the collection of subsets $\cset$ of $n-2$ elements of $\{1,\ldots,n\}$ such that $i_1<i_2<\cdots<i_{n-2}$. 

\begin{teo}[\cite{FMS}]\label{teo:FMS} 
Let $f = (f_1,\ldots,f_n)\colon\R^n\to\R^n$ be a semi-algebraic local diffeomorphism such that $\codim(S_f)\geq 2$. 
If
\begin{equation}\label{eq}
\F_{\ckif{n-2}} \textnormal{ is a foliation by planes for all }  
\cset\in C_{n-2},
\end{equation}
then $f$ is bijective.
\end{teo}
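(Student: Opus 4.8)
\emph{Injectivity.} My plan is to prove injectivity first (surjectivity will then follow) and, for injectivity, to reduce via \cite[Theorem 2.3]{B} to a planar statement. Put $R:=f^{-1}(S_f)$. As $f$ is a local diffeomorphism, $R$ is semi-algebraic and locally modelled on $S_f$, so $\codim(R)\geq 2$ and $\R^n\setminus R$ is connected; and since $S_f$ is closed, $f$ restricts to a local diffeomorphism $\R^n\setminus R\to\R^n\setminus S_f$ which is proper — if $K\subset\R^n\setminus S_f$ is compact, $f$ is proper at each of its points, so $f^{-1}(K)$ is compact and avoids $R$ — hence a covering map with connected total space over the connected base $\R^n\setminus S_f$. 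A perturbation argument (given $f(a)=f(b)$ with $a\ne b$, pass to disjoint neighbourhoods of $a$ and $b$ mapped diffeomorphically onto a common neighbourhood of $f(a)$, and pick there a value off the lower-dimensional set $S_f$) shows that $f$ is injective if and only if this covering has degree one. If $\dim S_f\leq n-3$ then $\R^n\setminus S_f$ is simply connected and we are done; otherwise $\pi_1(\R^n\setminus S_f)$ is generated by meridians of the $(n-2)$-dimensional part of $S_f$, and it suffices to prove that each such meridian lifts to a loop.

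\emph{Using \eqref{eq}.} At a regular point $p$ of the $(n-2)$-dimensional part of $S_f$, a dimension count produces a coordinate $2$-plane $\Pi$ through $p$, say the one parallel to the $(e_i,e_j)$-plane, meeting $S_f$ only at $p$ near $p$; so a meridian $\mu$ of $S_f$ at $p$ is $\partial D$ for a small disk $D\subset\Pi$ with $D\cap S_f=\{p\}$, and $\mu\subset\R^n\setminus S_f$. With $\cset=\{1,\dots,n\}\setminus\{i,j\}$ one has $f^{-1}(\Pi)=F_{\cset}^{-1}(q)$ for the appropriate $q$, which by \eqref{eq} is a disjoint union of plane leaves $P_\alpha\cong\R^2$ of $\F_{\ckif{n-2}}$ — finitely many, since $F_{\cset}$ is semi-algebraic — and on each, $f|_{P_\alpha}=(f_i,f_j)|_{P_\alpha}\colon\R^2\to\R^2$ is a local diffeomorphism. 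Now $f^{-1}(\mu)\to\mu$ is a covering of $S^1$ each of whose components lies in a single $P_\alpha$; hence if every $f|_{P_\alpha}$ is injective, each such component maps injectively, so homeomorphically, onto $\mu$, and $\mu$ lifts to a loop. This reduces the problem to the planar assertion that the restriction of $f$ to every leaf of every foliation in \eqref{eq} is injective, which is precisely the form of input treated in \cite[Theorem 2.3]{B}.

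\emph{The main obstacle.} Showing that a restriction $\psi:=f|_{P_\alpha}\colon\R^2\to\R^2$ is injective is exactly where the hypothesis $\codim(S_f)\geq 2$ is indispensable: a local diffeomorphism of $\R^2$ need not be injective, and can fail to lift the meridians of its own non-properness set, as $(x,y)\mapsto e^{x}(\cos y,\sin y)$ shows; moreover $f=(p(x_1,x_2),x_3,\dots,x_n)$, with $p$ Pinchuk's map, satisfies \eqref{eq} but is not injective, its $S_f$ having codimension $1$. The mechanism I would use is the elementary fact that a local diffeomorphism $(u,v)$ of $\R^2$ with $u^{-1}(c)$ connected for every $c$ is injective — then $u^{-1}(c)$ is a line (a compact component would map by a local diffeomorphism onto $\R$), so $v$ is strictly monotone along it. Taking $u=f_i|_{P_\alpha}$, one is reduced to proving that the level sets of $f_i|_{P_\alpha}$ are connected; these are the intersections with $P_\alpha$ of the level sets of the $(n-1)$-tuple $(f_l)_{l\ne j}$, hence $1$-manifolds with no compact component, and the delicate point — the technical heart, where $\codim(S_f)\geq 2$ should exclude the Pinchuk-type splitting occurring when $\codim(S_f)=1$ — is to prove they are connected.

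\emph{Surjectivity.} Once $f$ is injective, $f(\R^n)$ is open and, since $\partial f(\R^n)\subseteq S_f$ and $\codim(S_f)\geq 2$ makes $\R^n\setminus\partial f(\R^n)$ connected (with $f(\R^n)$ and $\R^n\setminus\overline{f(\R^n)}$ complementary open subsets of it), one gets $Z:=\R^n\setminus f(\R^n)\subseteq S_f$; in particular $Z$ is semi-algebraic of codimension $\geq 2$. If $Z\neq\vazio$, take a regular point $z$ of $Z$ and, as above, a coordinate $2$-plane $\Pi$ with $z$ isolated in $\Pi\cap Z$; then $f^{-1}(\Pi)$ is a disjoint union of finitely many plane leaves, each carried by the injective $f$ homeomorphically onto an open simply connected subset of $\Pi$, with pairwise disjoint images whose union is $f(\R^n)\cap\Pi=\Pi\setminus(\Pi\cap Z)$. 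Hence $\Pi\setminus(\Pi\cap Z)$ — a planar open set carrying a nontrivial small loop around $z$ — would be a disjoint union of finitely many simply connected open sets, which is absurd. Therefore $Z=\vazio$ and $f$ is bijective.
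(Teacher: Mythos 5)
Your skeleton --- pass to the proper covering $\R^n\setminus f^{-1}(S_f)\to\R^n\setminus S_f$, observe that $\pi_1(\R^n\setminus S_f)$ is normally generated by meridians of the $(n-2)$-dimensional stratum of $S_f$, and lift each meridian to a loop inside a plane leaf of the appropriate $\F_{\ckif{n-2}}$; then deduce surjectivity from density of the image plus simple connectedness of the leaves --- is sound, and it is a genuine reorganization of the argument of \cite{FMS,MV-S}, which instead derives a contradiction by constructing, around a smooth point $p$ of $S_f$, a compact neighbourhood $W$ with $f^{-1}(W)$ compact. But your proof has a real gap exactly where you flag it: the injectivity of $\psi_\alpha:=f|_{P_\alpha}\colon P_\alpha\to\Pi$ is never established, and both the meridian-lifting step and (implicitly) the choice of $\Pi$ rest on it. The claimed reduction to \cite[Theorem 2.3]{B} does not work: that theorem takes as hypothesis the integral condition (2.5) on the gradients $\nabla f_j$, which is not among the hypotheses here; it concerns preimages of affine subspaces, not connectedness of level sets of a restriction to a leaf; and Example \ref{ex:2} of this paper shows it is not even correct as stated. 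So the step you yourself call ``the technical heart'' is simply missing, and the proposal is incomplete as written.

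The gap can be closed with tools from this paper, but by properness rather than by connectedness of level curves. Each $P_\alpha$ is a closed, noncompact, simply connected leaf, hence diffeomorphic to $\R^2$; it is a connected component of a semi-algebraic set, so $\psi_\alpha$ is a semi-algebraic local diffeomorphism of the plane. Since $P_\alpha$ is closed in $\R^n$, a sequence divergent in $P_\alpha$ is divergent in $\R^n$, whence $S_{\psi_\alpha}\subset S_f\cap\Pi$. Choosing the coordinate $2$-plane $\Pi$ in general position with respect to a stratification of $S_f$ so that $S_f\cap\Pi$ is finite, one gets $\codim\left(S_{\psi_\alpha}\right)\geq 2$, and the bidimensional semi-algebraic Jelonek proposition proved at the end of Section \ref{sec:results} yields that $\psi_\alpha$ is bijective; this is precisely where $\codim(S_f)\geq 2$ enters, as you anticipated. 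With that supplied, the rest of your argument (each lift of a meridian is an embedded circle, hence the covering has degree one, and your surjectivity paragraph) goes through.
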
 

As a consequence it was established the following semi-algebraic version of a polynomial result given previously by Gutierrez and Maquera in \cite{GM}, where $\spec(f)=\{\lambda \in \C\mid \lambda \mbox{ is an eigenvalue of }  Df(x),\ x\in \R^n\}$: 

\begin{cor}[\cite{FMS}]\label{cor:FMS}
Let $f\colon\R^3\to\R^3$ be a $C^2$ semi-algebraic map such that $\codim\left(S_f\right)\geq2$. 
If $\spec(f)\cap[0,\eps)=\vazio$ for some $\eps>0$, then $f$ is injective.
\end{cor}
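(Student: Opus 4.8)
The plan is to obtain Corollary~\ref{cor:FMS} from Theorem~\ref{teo:FMS} by showing that the spectral hypothesis implies condition~\eqref{eq}. First note that $\spec(f)\cap[0,\eps)=\vazio$ contains the information $0\notin\spec(f)$, so $\det Df(x)\neq 0$ for every $x\in\R^3$; hence $f$ is a $C^2$ semi-algebraic local diffeomorphism and, in particular, each coordinate function $f_i$ is a submersion. Since $n=3$ we have $C_{n-2}=C_1=\{\{1\},\{2\},\{3\}\}$, so condition~\eqref{eq} reads: $\F_{f_i}$ is a foliation by planes for $i=1,2,3$. Granting this, Theorem~\ref{teo:FMS} gives that $f$ is bijective, in particular injective, proving the corollary.

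It thus remains to prove that the spectral hypothesis makes each $\F_{f_i}$ a foliation by planes. Fix $i$, put $\{j,k\}=\{1,2,3\}\setminus\{i\}$, and let $L$ be a leaf of $\F_{f_i}$, that is, a connected component of a level set $f_i^{-1}(c)$. I would first observe that $h:=(f_j,f_k)|_L\colon L\to\R^2$ is a $C^2$ local diffeomorphism: for $p\in L$ one has $T_pL=\ker df_i(p)$ and $Df(p)$ is invertible, so $Df(p)$ carries $T_pL$ isomorphically onto the coordinate plane $\{w_i=0\}$, whence $dh(p)$ is an isomorphism onto $\R^2$. Equivalently, the $C^1$ vector fields $X=\nabla f_i\times\nabla f_j$ and $Y=\nabla f_i\times\nabla f_k$ are nowhere zero on $L$ (a zero would make two of $\nabla f_1,\nabla f_2,\nabla f_3$ collinear, contradicting $\det Df\neq0$), they form a frame of $TL$, and their integral curves are the level curves of $f_j|_L$ and of $f_k|_L$, respectively; hence $L$ carries two transverse one-dimensional foliations, none of whose leaves is a circle (a leaf is sent by $h$, through an immersion, into a line, and $S^1$ does not immerse in $\R$). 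The remaining point is to show that, under $\spec(f)\cap[0,\eps)=\vazio$, this bifoliated surface has no half-Reeb component; then, by the classical theory of foliations of surfaces used in \cite{GM}, $h$ is injective and $L$ is homeomorphic to $\R^2$.

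The main obstacle is precisely this last point: turning the eigenvalue condition into the nonexistence of half-Reeb components (and of periodic leaves) for the restrictions to $L$ of the codimension-two foliations $\F_{(f_i,f_j)}$ and $\F_{(f_i,f_k)}$. This is a two-dimensional, Bendixson--Gutierrez type analysis, carried out leaf by leaf, and it is exactly the argument appearing in \cite{GM}; the key remark is that this argument uses only that $f$ is $C^2$ and satisfies the spectral hypothesis, not that $f$ is polynomial. Consequently the polynomial assumption in \cite{GM} can be weakened to ``$C^2$ semi-algebraic'', semi-algebraicity entering only to give meaning to $\codim(S_f)$ when Theorem~\ref{teo:FMS} is applied. (Alternatively, one may reprove the needed two-dimensional lemmas directly in the $C^2$ category and then invoke Theorem~\ref{teo:FMS}.)
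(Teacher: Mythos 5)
There is a genuine gap, and it sits exactly at the step you identify as the ``main obstacle'': your plan is to show that the one-sided hypothesis $\spec(f)\cap[0,\eps)=\vazio$ forces each $\F_{f_i}$ to be a foliation by planes, and then to invoke Theorem~\ref{teo:FMS}. That implication is false, and the paper itself contains the counterexample: in Example~\ref{ex:SpecnoPS} the map $f(x)=\bigl(\sqrt{1+x_1^2}-(x_2^2+x_3^2+1)x_1,\,x_2,\,x_3\bigr)$ is $C^\infty$ semi-algebraic with $\spec(f)=(-\infty,0)\cup\{1\}$ and $\codim(S_f)=2$, so it satisfies every hypothesis of Corollary~\ref{cor:FMS}, yet $f_1^{-1}(0)$ is connected and not simply connected, so $\F_{f_1}$ is not by planes and condition~\eqref{eq} fails. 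The same example shows your route cannot be repaired by a cleverer leaf-by-leaf analysis: if Theorem~\ref{teo:FMS} were applicable you would conclude that $f$ is \emph{bijective}, whereas this $f$ is injective but not surjective ($\Imm f=\R^3\setminus E$); this is precisely why the corollary asserts only injectivity, and why the paper remarks that bijectivity cannot be guaranteed here. The result of \cite{GM} that you are implicitly leaning on (the Bendixson--Gutierrez half-Reeb analysis) yields condition~\eqref{eq} only under the \emph{two-sided} hypothesis $\spec\cap(-\eps,\eps)=\vazio$, not under $\spec\cap[0,\eps)=\vazio$.

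The argument indicated in the paper (and carried out in \cite{FMS}) is therefore not a direct application of Theorem~\ref{teo:FMS} to $f$, but an approximation: one replaces $f$ by $C^2$ semi-algebraic maps $g$ satisfying the two-sided condition $\spec(g)\cap(-\eps,\eps)=\vazio$; for those, \cite[Theorem 1.1]{GM} gives condition~\eqref{eq}, Theorem~\ref{teo:FMS} gives bijectivity, and one then deduces injectivity (only) of the original $f$ in the limit. Your reduction skips this approximation entirely, so as written the proof does not go through. (Your preliminary observations --- that the hypothesis forces $\det Df\neq 0$, that $C_1=\{\{1\},\{2\},\{3\}\}$, and that each leaf of $\F_{f_i}$ carries two transverse nonsingular one-dimensional foliations without closed leaves --- are fine, but they are not where the difficulty lies.)
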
 

We remark that the above corollary follows essentially from Theorem \ref{teo:FMS} and an argument of approximating $f$ by $C^2$ semi-algebraic maps $g$ satisfying the analytical condition $\spec(g)\cap(-\eps,\eps)=\vazio$, as the later one implies the topological condition \eqref{eq} (see \cite[Theorem 1.1]{GM} and \cite[Corollary 1.5]{FMS} for details). 
The regularity $C^2$ is important in this corollary because the later implication needs this in \cite{GM}. 
Further, as observed in \cite[page 227]{FMS}, we cannot assure in Corollary \ref{cor:FMS} that $f$ is bijective (see also Example \ref{ex:SpecnoPS} below).  

Given a local diffeomorphism $f\colon\R^n\to\R^n$, it is a complicated task to check condition \eqref{eq}, so it is interesting to obtain results analogously to Corollary \ref{cor:FMS}, where analytical conditions provide condition \eqref{eq} and hence the global injectivity of $f$. 

In Section \ref{sec:results} we recall that each of the following conditions on a $C^2$ submersion $g: \R^3 \to \R$ guarantee that the foliation $\F_g$ is by planes. 
See Section \ref{sec:results} for the precise definition of the terms appearing below. 
\begin{enumerate}
\item[(a)]\label{cor:fib3} $g$ is a locally trivial fibration at the image. 
\item[(b)] $K_{\infty}(g)=\vazio$. 
\item[(c)] $g$ satisfies PS condition.
\item[(d)] $g$ satisfies 
$$
\int_{0}^{\infty}\inf_{\|x\|=r}\|\nabla g(x)\|\, dr=\infty. 
$$
\end{enumerate}	

So, by Theorem \ref{teo:FMS}, it readily follows 

\begin{cor}\label{pro:main1}
Let $f = (f_1,f_2,f_3)\colon\R^3\to\R^3$ be a $C^2$ semi-algebraic local diffeomorphism such that $\codim(S_f)\geq2$. 
If for each $i$, $f_i$ satisfies one of the conditions \emph{(a) -- (d)} above, then $f$ is bijective. 
\end{cor}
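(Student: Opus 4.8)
The plan is to reduce the statement directly to Theorem \ref{teo:FMS}, the only preliminary point being that the component functions $f_i$ are genuine $C^2$ submersions, so that the implications recalled in Section \ref{sec:results} apply to them.

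First I would note that since $f$ is a local diffeomorphism, the Jacobian matrix $Df(x)$ is invertible for every $x\in\R^3$; in particular its $i$-th row, which is $\nabla f_i(x)$, is nonzero for every $x$ and every $i\in\{1,2,3\}$. Hence each $f_i\colon\R^3\to\R$ is a $C^2$ submersion, and, as recalled in Section \ref{sec:results}, each of the hypotheses (a), (b), (c), (d) on such a submersion $g$ guarantees that $\F_g$ is a foliation by planes. Applying this with $g=f_i$, the assumption that every $f_i$ satisfies one of (a)--(d) gives that $\F_{f_i}$ is a foliation by planes for $i=1,2,3$.

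Next I would invoke Theorem \ref{teo:FMS} with $n=3$. In this case $n-2=1$, so $C_{n-2}=C_1=\{\{1\},\{2\},\{3\}\}$ and condition \eqref{eq} says precisely that $\F_{f_i}$ is a foliation by planes for each $i\in\{1,2,3\}$, which is exactly what was established in the previous paragraph. Since $\codim(S_f)\geq 2$ by hypothesis, Theorem \ref{teo:FMS} then yields that $f$ is bijective, completing the argument.

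There is essentially no obstacle here: the whole mathematical weight sits inside Theorem \ref{teo:FMS} and inside the implications (a)--(d) $\Rightarrow$ ``foliation by planes'' collected in Section \ref{sec:results}; the only thing needing verification, and it is immediate, is that invertibility of $Df$ turns each $f_i$ into a submersion, making those implications applicable.
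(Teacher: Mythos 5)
Your proposal is correct and follows the same route as the paper: the corollary is stated there as an immediate consequence of Theorem \ref{teo:FMS} once each condition (a)--(d) is known (via the results of Section \ref{sec:results}) to force $\F_{f_i}$ to be a foliation by planes, which for $n=3$ is exactly condition \eqref{eq}. Your added remark that invertibility of $Df$ makes each $f_i$ a submersion is a correct and sensible verification of the applicability of those implications.
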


And in higher dimensions, also in Section \ref{sec:results} we recall that each of the following conditions on a $C^2$ submersion $g: \R^n \to \R^{n-2}$ guarantee that the foliation $\F_g$ is by planes: 
\begin{enumerate}
\item[(a$_h$)]\label{cor:fib} $g$ is a locally trivial fibration.  
\item[(b$_h$)] For all $(c_{1},\ldots,c_{k})\in\Imm G_{k}$, $k= 1,\ldots,n-3$, the functions $g_1$, $g_{2}|_{g_1^{-1}(c_{1})}$, $g_{3}|_{G^{-1}_{2}(c_{1},c_{2})}$,\ldots, $g_{n-2}|_{G^{-1}_{{n-3}}(c_{1},\ldots,c_{n-2})}$ are locally trivial fibrations at the image. 
\item[(c$_h$)] $K_\infty(g)=\vazio$. 
\item[(d$_h$)] For all $(c_{1},\ldots,c_{k})\in\Imm G_{{k}}$ and $k= 1,\ldots,n-3$, the sets $K_\infty(g_1)\cap\Imm(g_1)$, $K_\infty(g_{2}|_{g^{-1}_{1}(c_{1})})\cap\Imm(g_{2}|_{g^{-1}_{1}(c_{1})})$, $K_\infty(g_{3}|_{G^{-1}_{2}(c_{1},c_{2})})\cap\Imm(g_{3}|_{G^{-1}_{2}(c_{1},c_{2})}),\ldots,$ $K_\infty(g_{n-2}|_{G^{-1}_{{n-3}}(c_{1},\ldots,c_{n-3})})\cap\Imm(g_{n-2}|_{G^{-1}_{{n-3}}(c_{1},\ldots,c_{n-3})})$ are empty. 
\item[(e$_h$)]\label{cor:sab2} Given a $C^2$ submanifold $X$ of $\R^n$ and $x \in X$, we denote by $\nabla h_j|_X(x)$ the projection of $\nabla g_j(x)$ to the tangent space $T_xX$. 
Assume that 
\begin{align*}
\int_{0}^{\infty} \inf_{\|x\|=r}\|\nabla g_{1}(x)\|\, dr & =\int_{0}^{\infty}\inf_{\|x\|=r}\|\nabla h_{2}|_{g^{-1}_{1}(c_{1})}(x)\|\, dr \\
& = \int_{0}^{\infty}\inf_{\|x\|=r}\|\nabla h_{3}|_{G^{-1}_{2}(c_{1},c_{2})}(x)\|\, dr 
=\cdots \\
& = \int_{0}^{\infty}\inf_{\|x\|=r}\|\nabla h_{n-2}|_{G^{-1}_{n-3}(c_{1},\ldots,c_{n-3})}(x)\|\, dr \\
& =\infty
\end{align*}
for all $(c_{\cum},\ldots,c_{\cumk})\in\Imm G_k$ and $k= 1,\ldots,n-3$. 
\end{enumerate}

So by Theorem \ref{teo:FMS}: 
\begin{cor}\label{pro:main2}
Let $f = (f_1,\ldots,f_n)\colon\R^n\to\R^n$, $n > 3$, be a $C^2$ semi-algebraic local diffeomorphism such that $\codim(S_f)\geq2$. 
If for every $\cset\in C_{n-2}$ the map $g=F_{i_{n-2}}$ satisfies one of the conditions \emph{(a$_h$) -- (e$_h$)}, then $f$ is bijective. 
\end{cor}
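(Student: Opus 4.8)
The plan is to reduce Corollary \ref{pro:main2} to Theorem \ref{teo:FMS}. By that theorem, it suffices to check that for every $\cset\in C_{n-2}$ the codimension-two foliation $\F_{\ckif{n-2}}$ is a foliation by planes, i.e.\ that its leaves are simply connected. So the whole content of the corollary is the implication: if $g=F_{i_{n-2}}\colon\R^n\to\R^{n-2}$ is a $C^2$ submersion satisfying one of (a$_h$)--(e$_h$), then $\F_g$ is a foliation by planes. This is precisely the package of facts that, according to the text preceding the statement, is recalled in Section \ref{sec:results}; so I would state this as a stand-alone proposition first and invoke it here.

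The core step is the equivalence (a$_h$) $\Leftrightarrow$ (b$_h$), together with the chain (c$_h$) $\Rightarrow$ (b$_h$), (d$_h$) $\Rightarrow$ (b$_h$), (e$_h$) $\Rightarrow$ (b$_h$). For (a$_h$) $\Rightarrow$ leaves are planes: if $g$ is a locally trivial fibration, then locally over $\Imm g$ it looks like $U\times\R^2$, so each fiber $g^{-1}(c)$ — which, since $g$ is a submersion, is a $C^2$ surface, and a leaf of $\F_g$ is a connected component of such a fiber — is a connected component of a locally trivial bundle over a (contractible, as $\Imm g$ is semi-algebraically contractible or at least over which the bundle trivializes) base with fiber $\R^2$; hence the leaf is homeomorphic to $\R^2$. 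The reduction from (b$_h$) to (a$_h$) is the inductive mechanism: one builds the leaf of $\F_g$ as an iterated fibration, first by $g_1$, then restricting to a level set of $g_1$ and fibering by $g_2|_{g_1^{-1}(c_1)}$, and so on down to $g_{n-2}$; if at each stage the relevant restricted map is a locally trivial fibration at the image (which for a map to $\R$ with contractible image means the total space is, fiberwise-by-fiberwise, a product), one gets that the leaf is an iterated $\R^1$-bundle, hence $\R^2$ after the last two steps collapse. Conditions (c$_h$), (d$_h$), (e$_h$) each feed into this inductive scheme: $K_\infty(\cdot)=\vazio$ on the image guarantees, by the standard fibration theorem for maps with no critical values and no asymptotic critical values at infinity (e.g.\ the Rabier / Kurdyka–Orro–Simon criterion), that the relevant map to $\R$ is a locally trivial fibration at the image; and the integral condition $\int_0^\infty \inf_{\|x\|=r}\|\nabla(\cdot)\|\,dr=\infty$ implies, via a Malgrange-type argument, that $K_\infty$ is empty (or directly that the map is a fibration). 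So the logical skeleton is: (c$_h$)/(d$_h$)/(e$_h$) $\Rightarrow$ each stage is a fibration at the image $\Rightarrow$ (b$_h$) $\Rightarrow$ leaf is $\R^2$; and separately (a$_h$) $\Rightarrow$ leaf is $\R^2$.

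I expect the main obstacle to be the inductive argument that (b$_h$) implies the leaves are simply connected, because it requires carefully organizing the iterated fibration: one must verify that the level set $G_k^{-1}(c_1,\ldots,c_k)$ is at each stage a $C^2$ manifold on which $g_{k+1}$ restricts to a submersion (this uses that $g$ is globally a submersion, so the $dg_j$ are independent), that "locally trivial fibration at the image" for a function with connected, simply connected image forces the total space of each stage to deformation-retract correctly, and that the homotopy long exact sequence of the tower of fibrations $\mathbb R^1\to(\text{stage }k)\to(\text{stage }k{-}1)$ kills $\pi_1$. The subtlety is that "fibration at the image" is weaker than "fibration" (the base is only $\Imm g$, possibly not all of $\R$), so one needs that $\Imm g_{k+1}|_{G_k^{-1}(c_1,\ldots,c_k)}$ is an open interval, hence contractible, to trivialize. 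Once that bookkeeping is done, everything else is a routine application of Theorem \ref{teo:FMS}: run it over each of the finitely many index sets $\cset\in C_{n-2}$, using that deleting any two coordinate functions of the local diffeomorphism $f$ yields a submersion (the Jacobian of $f$ being nowhere zero makes any $n-2$ of its components a submersion), and conclude bijectivity of $f$.
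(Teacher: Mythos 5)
Your plan follows the paper's route: reduce to Theorem \ref{teo:FMS} by showing that each of (a$_h$)--(e$_h$) forces $\F_g$ to be a foliation by planes, handling (a$_h$) via triviality of a locally trivial fibration over the contractible base $\R^{n-2}$ (Proposition \ref{pro:fibration}), (b$_h$) via the iterated stage-by-stage fibration argument (Proposition \ref{pro:fibration2}), and the remaining conditions by feeding Rabier's inclusion $B(g)\subset K_\infty(g)\cup g(\Sing(g))$ and the completeness lemmas \ref{lem:1} and \ref{lem:2} into those two propositions. This is exactly how Section \ref{sec:results} organizes the material, so the overall structure is right.

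Two assertions in your logical skeleton are off, although neither is fatal because the correct alternatives also appear in your plan. First, you call the equivalence (a$_h$) $\Leftrightarrow$ (b$_h$) ``the core step''; the paper neither claims nor needs any such equivalence --- the two conditions are treated by two independent propositions, and it is not clear that either implication holds. Second, you route (c$_h$) through (b$_h$). But (c$_h$) is the condition $K_\infty(g)=\vazio$ for the full vector-valued map $g\colon\R^n\to\R^{n-2}$, and this does not obviously yield the iterated conditions on the restrictions $g_{k+1}|_{G_{k}^{-1}(c_1,\ldots,c_k)}$; the correct (and simpler) route is (c$_h$) $\Rightarrow B(g)=\vazio$ by Theorem \ref{Rabier} (since $\Sing(g)=\vazio$ for a submersion), i.e.\ (c$_h$) $\Rightarrow$ (a$_h$). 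It is (d$_h$) and (e$_h$), which are formulated stage by stage, that go through the iterated scheme. Finally, in your ``tower of fibrations $\R^1\to(\text{stage }k)\to(\text{stage }k-1)$'' the roles of fiber and base are scrambled: at each stage the total space is $G_{k}^{-1}(c_1,\ldots,c_{k})$, the base is the open interval $\Imm\bigl(g_{k+1}|_{G_{k}^{-1}(c_1,\ldots,c_{k})}\bigr)$, and the fiber is $G_{k+1}^{-1}(c_1,\ldots,c_{k+1})$; the paper sidesteps the homotopy exact sequence entirely by noting that a locally trivial fibration over a contractible base is trivial, so the fiber times an interval is homeomorphic to the total space, which is simply connected by the inductive hypothesis.
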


The fact that each condition above implies that the related foliation is by planes follows from very well known facts, so our purpose in this part is merely expository and it is motivated by the fact that the above injectivity results are not explicitly presented in the literature, for $n\geq 3$. 
A further motivation is that when $n=2$, it is well known that each of the conditions (a) -- (d) on one of the components of $f$ guarantees the injectivity of $C^2$ local diffeomorphisms $f:\R^2\to\R^2$ without any other assumption (see \cite{GJLT,deM,R2,S}), as it is the case of the spectral condition in Corollary \ref{cor:FMS} \cite{CGL, FGR, G}. 
On the other hand corollaries \ref{pro:main1} and \ref{pro:main2} are not true without the condition $\codim(S_f) \geq 2$, as shown by Example \ref{cex:dim3}. 
See Section \ref{examples} for more information. 

The relations between conditions (a), (b) and (c) are known and will be recalled in Section \ref{sec:results}. 
But up to our knowledge, there are no comparisons of these conditions with the spectral condition of Corollary \ref{cor:FMS}. 
In Section \ref{examples}, invoking results from Section \ref{sec:results}, we present examples proving that all the conditions (a) -- (d) are different of the spectral condition of Corollary \ref{cor:FMS}. 

Given a complex map $f\colon\C^m\to\C^m$ we can treat it as a real map $f_{\R} = (f_1,\ldots,f_n)\colon\R^{n}\to\R^{ n}$, where $n = 2m$. 
As a consequence of our reasonings in Section \ref{sec:results}, we obtain the following  equivalence with the Jacobian Conjecture \ref{JaCo} by means of fibrations, this is related to an equivalence in $\C^2$ given by L\^e and Weber in \cite{LW}, see also \cite[Conjecture 1.7]{BDVS}: 

\begin{pro}\label{pro:equiv} 
	Let $f\colon\C^m\to\C^m$, $m\geq2$, be a polynomial map with nowhere zero Jacobian determinant. 
	Then $f$ is an automorphism if and only if for its real representation $f_{\R}=(f_1,\ldots,f_n)\colon\R^{n}\to\R^{n}$, the maps $F_{\cki{n-2}}$ are locally trivial fibrations for all  combinations $\cset\in C_{n-2}$. 
\end{pro}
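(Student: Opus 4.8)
The plan is to prove the two implications separately. The direction ``automorphism $\Rightarrow$ fibrations'' is essentially formal, while the converse will follow by combining Theorem \ref{teo:FMS}, the implication ``$g$ a locally trivial fibration $\Rightarrow$ $\F_g$ a foliation by planes'' coming from condition (a$_h$) of Section \ref{sec:results}, and Jelonek's structure theorem for the set of non-proper points.

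Suppose first that $f$ is an automorphism. Then the real representation $f_{\R}\colon\R^n\to\R^n$ is a polynomial diffeomorphism, because both $f_{\R}$ and $(f^{-1})_{\R}=(f_{\R})^{-1}$ are polynomial. Fix $\cset\in C_{n-2}$ and write $F_{\cki{n-2}}=\pi\circ f_{\R}$, where $\pi\colon\R^n\to\R^{n-2}$ denotes the projection onto the coordinates indexed by $\cset$. Conjugating $\pi$ by the permutation isomorphism $\R^n\cong\R^{n-2}\times\R^2$ that turns it into the projection onto the first factor, we see that, after the change of coordinates $f_{\R}$ on the source, $F_{\cki{n-2}}$ becomes the trivial projection $\R^{n-2}\times\R^2\to\R^{n-2}$. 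Hence $F_{\cki{n-2}}$ is a globally trivial $C^{\infty}$ fibration with fibre $\R^2$, in particular a locally trivial fibration; note also that it is automatically a surjective submersion, since $Df_{\R}$ is everywhere invertible.

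For the converse, assume that $F_{\cki{n-2}}$ is a locally trivial fibration for every $\cset\in C_{n-2}$. First, $f_{\R}$ is a polynomial, hence semi-algebraic, local diffeomorphism, because $\det Df_{\R}=|\det Df|^{2}$ is nowhere zero. Second, whether $f$ is regarded as a complex or as a real map the locus where it fails to be proper is the same, so $S_{f_{\R}}=S_{f}$; since $f$ is a local biholomorphism it is dominant, and by Jelonek's theorem \cite{J} $S_f$ is either empty or a complex hypersurface, whence $\codim(S_{f_{\R}})\geq 2$. Third, by the implication recalled in Section \ref{sec:results} for condition (a$_h$), each $\F_{F_{\cki{n-2}}}$ is a foliation by planes. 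Therefore Theorem \ref{teo:FMS} applies to $f_{\R}$ and gives that $f_{\R}$, equivalently $f$, is bijective; and a bijective (indeed, any injective) polynomial self-map of $\C^m$ is an automorphism (see, e.g., \cite{essen}), so $f$ is an automorphism.

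The only substantial external input is the estimate $\codim(S_{f_{\R}})\geq 2$, which relies on Jelonek's description of the non-properness locus of a dominant polynomial map; everything else is a dictionary between the complex and real settings together with the results already quoted. One should also check that ``locally trivial fibration'' in the precise sense of Section \ref{sec:results} is what the forward direction produces, but this is immediate, as a fibration over $\R^{n-2}$ that is globally trivial is a fortiori locally trivial over $\R^{n-2}$.
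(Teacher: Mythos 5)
Your proof is correct and follows essentially the same route as the paper: the forward direction trivializes $F_{\cki{n-2}}$ by composing with $f_{\R}^{-1}$ (the paper just writes the resulting homeomorphism explicitly), and the converse combines Jelonek's codimension bound on $S_f$ with Proposition \ref{pro:fibration} and Theorem \ref{teo:FMS}, finishing with the fact that an injective polynomial self-map of $\C^m$ has polynomial inverse.
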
 

As a final application of Section \ref{sec:results}, in Section \ref{sec:rotation} we discuss about a key difference between spectral hypotheses and the conditions discussed in this work. 
Given a local diffeomorphism  $f \colon\R^n\to\R^n$ and a linear isomorphism $A\colon\R^n\to\R^n$, we clearly have $\spec(f) = \spec(A\circ f \circ A^{-1})$. 
That is, spectral conditions are invariant by conjugation with linear isomorphisms. 
This property was important in the proofs of global injectivity results related to spectral conditions in the works \cite{CGL,FGR,G,GM}. 
In Example \ref{cex:1}, we show that the same is false for regularity conditions at infinity, the integral condition (d), locally trivial fibrations and even for the topological condition \eqref{eq}. 

Such discussion is pertinent to clarify some misunderstandings such as in \cite[Lemma 8]{MV-S}, where it is claimed that condition \eqref{eq} is invariant by composing $f$ through the left by linear isomorphisms in contrast to Example \ref{cex:1}. 
Anyway, in Section \ref{sec:rotation} we give an alternative proof to \cite[Theorem 1]{MV-S} without using \cite[Lemma 8]{MV-S}. 
Another misunderstanding appears in the proof of \cite[Theorem 2.3]{B}, where it is claimed that a generalization of (d) to higher dimensions is  invariant by composing $f$ through the left by rotations, in contrast to Example \ref{ex:1}. 
Further, Example \ref{ex:2} shows that \cite[Theorem 2.3]{B} needs to be slightly reformulated. 
We finish proposing a modified version of \cite[Theorem 2.3]{B}, namely, Theorem \ref{novo} below, arguing in the sequel that this new version is sufficient to obtain the global injectivity results of \cite{B}. 

\section{Fibrations, regularity conditions and global inversion results}
\label{sec:results}

\subsection{Fibrations}

Before introducing the regularity conditions that ensure condition \eqref{eq}, we will present some intermediate conditions based on the concept of locally trivial fibrations. 

\begin{defi}\label{def:bv} 
We say that a continuous map $g: X \to Y$ between topological spaces $X$ and $Y$ is a \emph{trivial fibration} if there exists a topological space $\mathcal{F}$ and a homeomorphism $\varphi\colon \mathcal{F}\times Y\to X$ such that $pr_2 = g\circ\varphi$ is the projection on $Y$.  Note that if $g$ is a trivial fibration, then each fiber $g^{-1}(y)$, for $y\in Y$, is homeomorphic to $\mathcal{F}$.
We say further that $g$ is a \emph{locally trivial fibration at $y\in Y$} if there exists an open neighborhood $U$ of $y$ in $Y$ such that $g|_{g^{-1}(U)}\colon g^{-1}(U)\to U$ is a trivial fibration. 
We denote by $B(g)$ the set of points of $Y$ where $g$ is not a locally trivial fibration. 
The set $B(g)$ is usually called the \emph{bifurcation} (or \emph{atypical}) set of $g$. 
In case $B(g)$ is the empty set we simply say that $g$ is a \emph{locally trivial fibration}. 
\end{defi}

Let us recall a known property on trivial fibrations, see for instance \cite[11.6]{St51}. 

\begin{pro}\label{pro:eq}
	If $g\colon X\to Y$ is a locally trivial fibration and $Y$ is a contractible space, then $g$ is a trivial fibration.
\end{pro}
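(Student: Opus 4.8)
The plan is to exploit the homotopy-theoretic structure of locally trivial fibrations together with contractibility of $Y$ to patch local trivializations into a global one. First I would recall that a locally trivial fibration $g\colon X\to Y$ satisfies the homotopy lifting property with respect to all spaces (this is the classical Hurewicz–Steenrod fact that local triviality over a reasonable base implies the fibration property; for paracompact $Y$ one uses a partition of unity argument), so $g$ is in particular a Hurewicz fibration. Then I would use the standard result that a Hurewicz fibration over a contractible base is fiber-homotopy trivial, and in fact — since we are dealing with genuine local product structures rather than only homotopy ones — that local triviality is itself preserved under the homotopy used to contract $Y$.

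The key steps, in order, are: (1) fix a base point $y_0 \in Y$, let $\F = g^{-1}(y_0)$, and let $H\colon Y\times[0,1]\to Y$ be a contraction with $H_0 = \mathrm{id}_Y$ and $H_1 \equiv y_0$; (2) pull back the fibration $g$ along $H$ to obtain a locally trivial fibration over $Y\times[0,1]$; (3) invoke the covering-homotopy / local-triviality-over-a-cylinder lemma (as in \cite[11.6]{St51}), which says that a locally trivial fibration over $Y\times[0,1]$ restricted to $Y\times\{0\}$ and to $Y\times\{1\}$ yields isomorphic fibrations over $Y$; (4) conclude that $g$ over $Y = Y\times\{0\}$ is isomorphic to the pullback of $g$ over $Y\times\{1\}$, which is the pullback of $g|_{g^{-1}(y_0)}\colon \F\to\{y_0\}$ along the constant map, i.e. the trivial fibration $\mathrm{pr}_2\colon \F\times Y\to Y$; (5) read off the homeomorphism $\varphi\colon\F\times Y\to X$ with $\mathrm{pr}_2 = g\circ\varphi$ from this isomorphism.

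The main obstacle is step (3): the precise statement that over a cylinder $Y\times[0,1]$ a locally trivial fibration is determined up to isomorphism by its restriction to either end. This is exactly the content cited from \cite[11.6]{St51}, but its validity needs $Y$ (or $Y\times[0,1]$) to be paracompact so that the local trivializations can be glued; for the applications in this paper $Y$ is an open subset of $\R^k$ or $\R$, hence paracompact, so this hypothesis is harmless and I would simply note it. Granting that lemma, the rest is a formal manipulation of pullbacks, and the proof is short.
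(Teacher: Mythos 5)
Your plan is correct and matches the paper, which does not prove this proposition at all but simply cites \cite[11.6]{St51}; your steps (1)--(5) are the standard unwinding of exactly that classical result. Your remark that one needs $Y$ paracompact (harmless here, since in all applications $Y$ is an open subset of some $\R^k$) is a legitimate caveat that the paper's blanket statement for arbitrary topological spaces glosses over.
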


This property will be applied to relate locally trivial fibrations to condition \eqref{eq} in the following 

\begin{pro}\label{pro:fibration}
Let $g:\R^n\to\R^{n-2}$, $n\geq3$, be a continuous map such that $B(g) = \vazio$. 
Then the level sets of $g$ are simply connected. 
In particular, $\F_g$ is a foliation by planes.
\end{pro}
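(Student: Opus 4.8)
The plan is to use the hypothesis $B(g)=\vazio$ together with Proposition~\ref{pro:eq} to show that $g$ is globally a trivial fibration, and then deduce the topology of the fibers from that of the total space $\R^n$. Since $Y = \R^{n-2}$ is contractible, Proposition~\ref{pro:eq} immediately gives a homeomorphism $\varphi\colon \mathcal{F}\times\R^{n-2}\to\R^n$ with $pr_2 = g\circ\varphi$; in particular each level set $g^{-1}(c)$ is homeomorphic to the fixed fiber $\mathcal{F}$, and $\mathcal{F}\times\R^{n-2}\cong\R^n$.

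The core of the argument is then to extract from $\mathcal{F}\times\R^{n-2}\cong\R^n$ that $\mathcal{F}$ is simply connected. First I would observe that $\mathcal{F}$ is a topological manifold: it is a retract (indeed a slice) of $\R^n$, and slicing $\R^n\cong\mathcal{F}\times\R^{n-2}$ by $\mathcal{F}\times\{0\}$ exhibits $\mathcal{F}$ as a locally Euclidean, second countable Hausdorff space; invariance of domain and the product structure force $\dim\mathcal{F}=2$, so $\mathcal{F}$ is a surface. For the fundamental group, $\pi_1(\mathcal{F}) = \pi_1(\mathcal{F}\times\R^{n-2}) = \pi_1(\R^n) = 0$ since $\R^{n-2}$ is contractible and $\pi_1$ commutes with products; similarly all higher homotopy (and homology) groups of $\mathcal{F}$ vanish, so $\mathcal{F}$ is a contractible surface. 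By the classification of (non-compact, boundaryless) surfaces — or more elementarily, a simply connected open subset of a surface that is contractible must be $\R^2$ — we conclude $\mathcal{F}\cong\R^2$. Hence every leaf of $\F_g$ is homeomorphic to $\R^2$, which is exactly the assertion that $\F_g$ is a foliation by planes.

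I would present the homotopy/homology computation as the clean route and invoke the classification of open surfaces, noting the non-compactness of $\mathcal{F}$ comes from $\R^n$ being non-compact (or simply from $\mathcal{F}$ being contractible and hence not $S^2$). The main obstacle I anticipate is the passage from ``$\mathcal{F}$ is a contractible topological $2$-manifold'' to ``$\mathcal{F}\cong\R^2$'': this is true but relies on the (nontrivial) classification of surfaces or on a Riemann-mapping-type / Radó–Stoïlow argument, so I would cite a standard reference rather than prove it; everything else (product formula for $\pi_1$, that $\mathcal{F}$ is a manifold of the right dimension, contractibility) is routine. One should also double-check the degenerate boundary case $n=3$, where $g\colon\R^3\to\R$ and the same reasoning applies verbatim with $Y=\R$.
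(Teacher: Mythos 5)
Your proposal is correct and follows essentially the same route as the paper: apply Proposition \ref{pro:eq} over the contractible base $\R^{n-2}$ to obtain $g^{-1}(c)\times\R^{n-2}\cong\R^n$ and read off $\pi_1(g^{-1}(c))=0$ from the product structure. The additional work identifying the fiber with $\R^2$ is not needed (the paper's definition of ``foliation by planes'' already records the standard equivalence between simply connected leaves and planar leaves of a two-dimensional foliation), and the one step there that deserves care is your claim that invariance of domain makes $\mathcal{F}$ a surface: a Cartesian factor of $\R^n$ need not be a manifold (e.g.\ Bing's dogbone space $B$ satisfies $B\times\R\cong\R^4$), so the two-dimensionality of the leaves should instead be taken from the fact that $g$ is a submersion, which is what makes $\F_g$ a foliation in the first place.
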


\begin{proof}
Let $c \in\R^{n-2}$. 
Since $\R^{n-2} $ is contractible, it follows from Proposition \ref{pro:eq} that $g^{-1}(c)\times\R^{n-2}$ is homeomorphic to $\R^n$. 
Thus $g^{-1}(c)$ is simply connected. 
In particular, the leaves of $\F_g$ are simply connected, and hence this foliation is by planes.
\end{proof}

This basic property is enough to obtain the previously enunciated equivalence with the Jacobian Conjecture \ref{JaCo} by means of fibrations:

\begin{proof}[Proof of Proposition \ref{pro:equiv}]
If $f$ is an automorphism then $f_{\R} = \left(f_1,\ldots,f_n\right)\colon\R^{n}\to\R^{n}$ is a global diffeomorphism and so the map $F_{\cki{n-2}}$ is a locally trivial fibration, for all combinations $\cset\in C_{n-2}$. 
Indeed, consider without loss of generality the map $F_{n-2}=(f_1,\ldots,f_{n-2})$ and $c\in\R^{n-2}$. 
Denoting $t=(t_1,\ldots,t_{n-2})$, it follows that $\varphi\colon F_{n-2}^{-1}(c)\times\R^{n-2} \to \R^n$ given by $\varphi(x,t)=f_{\R}^{-1}(t,f_{n-1}(x),f_n(x))$, with inverse $\varphi^{-1}(y) = \left(f_{\R}^{-1}(c, f_{n-1}(y), f_n(y)), F_{n-2}(y) \right)$, is a homeomorphism such that $F_{n-2}\circ \varphi=pr_2$. 
	
On the other hand, for a polynomial map $f\colon\C^m\to\C^m$ with nowhere zero Jacobian determinant, it follows from \cite{J99} and \cite{J93} that the set $S_f$ of non-proper points of $f$ is empty or has (complex) codimension $1$. 
In any case the real representation of $f$, $f_{\R}=(f_1,\ldots,f_n)\colon\R^{n}\to\R^{n}$, has $S_{f_{\R}}$ with (real) codimension greater than or equal to two. 
If $F_{\cki{n-2}}$ is a locally trivial fibration for all combinations $\cset\in C_{n-2}$, it follows from Proposition \ref{pro:fibration} that $f_{\R}$ satisfies condition \eqref{eq}, and hence $f_{\R}$ is bijective from Theorem \ref{teo:FMS}. 
Thus $f$ is bijective. 
The inverse map is polynomial from \cite{CR}. 
\end{proof}

We notice that by definition locally trivial fibrations are surjective maps. 
But for instance, although $f\colon\R \to\R$ defined by $f(x) = x + \sqrt{1+x^2}$ is not surjective, the restriction of this function to its image $f\colon\R \to(0, \infty)$ is a locally trivial fibration. 
We include this cases with the following definition, where for a map $f: X \to Y$ we denote $\Imm(f) = f(X)$. 

\begin{defi}\label{def:fibi}
We say that a continuous map $g\colon X \to Y$ is a \emph{locally trivial fibration at the image} if the restriction $g\colon X\to \Imm(g)$ is a locally trivial fibration. 
Equivalently, if $BI(g):=B(g)\cap \Imm(g)=\vazio$.
\end{defi}

\begin{pro}\label{pro:fibration2}
Let $g:\R^n\to\R^{n-2}$, $n\geq3$, be a continuous map. 
Assume that we have 
$$
BI(g_{1}) = BI\left(g_{{k+1}}|_{G^{-1}_{{k}}(c_{1},\ldots,c_{k})}\right)= \vazio, 
$$
for all $(c_{1},\ldots,c_{k})\in\Imm G_{{k}}$, $k = 1,\ldots, n-3$. 
Then for each $c\in\Imm g$ the level sets $g^{-1}(c)$ are simply connected. 
In particular, $\F_g$ is by planes.
\end{pro}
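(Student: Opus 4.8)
The plan is to prove the statement by induction on $n$, reducing the simple connectedness of a level set $g^{-1}(c) \subset \R^n$ to that of a level set of a restriction of $g_{n-2}$ to a fiber of $G_{n-3}$, which lives in a space of dimension $n-1$. The base case is $n=3$: here there is only the single hypothesis $BI(g_1) = \vazio$, i.e. $g = g_1 \colon \R^3 \to \R$ is a locally trivial fibration at the image, and then for $c \in \Imm g$ the fiber $g^{-1}(c)$ is homeomorphic to a fiber of a trivial fibration over an interval (a connected $1$-manifold that is a neighborhood of $c$ in $\Imm g$, which is an interval since $\Imm g$ is a connected semi-algebraic, hence interval, subset of $\R$), so $g^{-1}(c) \times I \cong g^{-1}(I)$ is an open subset of $\R^3$; one then argues $g^{-1}(c)$ is simply connected much as in the proof of Proposition \ref{pro:fibration}, the point being that an open contractible-in-$\R^3$ issue is handled by the product structure together with contractibility of $I$. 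Actually the cleanest route for the base case is to note $\Imm g$ is an interval, hence contractible, so by Proposition \ref{pro:eq} the map $g \colon \R^3 \to \Imm g$ is a globally trivial fibration, whence $g^{-1}(c) \times \Imm g \cong \R^3$ and $g^{-1}(c)$ is simply connected.

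For the inductive step, fix $c = (c_1,\ldots,c_{n-2}) \in \Imm g$. Write $G_{n-3} = (g_1,\ldots,g_{n-3})$ and set $c' = (c_1,\ldots,c_{n-3})$, so $c' \in \Imm G_{n-3}$ and $g^{-1}(c) = \big(g_{n-2}|_{G_{n-3}^{-1}(c')}\big)^{-1}(c_{n-2})$. The submanifold $M := G_{n-3}^{-1}(c')$ is, by the inductive hypothesis applied to the map $G_{n-3} \colon \R^n \to \R^{n-3}$ — whose hypotheses $BI(g_1) = BI(g_{k+1}|_{G_k^{-1}(c_1,\ldots,c_k)}) = \vazio$ for $k = 1,\ldots,n-4$ are exactly the relevant sub-collection of our assumptions — a simply connected level set; moreover by the same inductive statement (or rather by Proposition \ref{pro:fibration}/\ref{pro:fibration2} in the appropriate dimension) $M$ is homeomorphic to $\R^2 \times \R^{?}$... here I must be careful: the conclusion of Proposition \ref{pro:fibration2} in dimension $n-3$ is that fibers of $G_{n-3}$ are simply connected, but to run the next stage I want $M$ to be contractible, or at least to be able to trivialize $g_{n-2}|_M$ over an interval. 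So the honest inductive statement to carry is stronger: I will prove by induction that $G_k^{-1}(c_1,\ldots,c_k)$ is homeomorphic to $\R^{n-k}$ for all $(c_1,\ldots,c_k) \in \Imm G_k$, $k = 1,\ldots,n-2$ — the case $k = n-2$ being the desired conclusion (with $\R^{n-k} = \R^2$, giving simple connectedness).

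With that strengthened inductive hypothesis, the step runs as follows: assume $M = G_{n-3}^{-1}(c') \cong \R^3$. The restriction $h := g_{n-2}|_M \colon M \to \R$ satisfies $BI(h) = \vazio$ by hypothesis, so $h \colon M \to \Imm h$ is a locally trivial fibration; since $\Imm h$ is a connected semi-algebraic subset of $\R$, hence an interval, hence contractible, Proposition \ref{pro:eq} gives that $h \colon M \to \Imm h$ is globally trivial, so $h^{-1}(c_{n-2}) \times \Imm h \cong M \cong \R^3$. It remains to deduce $h^{-1}(c_{n-2}) \cong \R^2$ from the fact that $h^{-1}(c_{n-2}) \times (\text{interval}) \cong \R^3$; this is where I invoke that $h^{-1}(c_{n-2})$ is a $2$-dimensional leaf of the relevant foliation and that a $2$-manifold $\Sigma$ with $\Sigma \times \R \cong \R^3$ must be $\R^2$ — equivalently $\Sigma$ is an open simply connected surface (simple connectedness passes from the product, and an open surface that is simply connected is homeomorphic to $\R^2$ by classification of surfaces). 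The main obstacle, and the step deserving the most care, is precisely this last descent from a product statement to the factor: matching the semi-algebraic/$C^2$ setup so that the fibers are genuinely $2$-manifolds (submersion hypothesis, which is why $f$ is a local diffeomorphism and the projections are submersions) and ensuring the trivializations at each stage are compatible — i.e., that restricting to a fiber and then trivializing is the same as trivializing the composite. I expect to organize this by carefully tracking that at each stage the ambient manifold is contractible (indeed a Euclidean space) so Proposition \ref{pro:eq} applies cleanly, thereby turning every "locally trivial at the image" hypothesis into a global product decomposition and peeling off one $\R$ factor at a time.
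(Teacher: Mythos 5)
Your base case and the overall ``peel off one coordinate at a time'' structure match the paper's proof, but your strengthened inductive hypothesis --- that $G_k^{-1}(c_1,\ldots,c_k)$ is homeomorphic to $\R^{n-k}$ --- introduces a genuine gap. At each stage the trivialization only hands you a homeomorphism $F\times I\cong X$ with $I$ an interval, and the implication ``$F\times\R\cong\R^m$ implies $F\cong\R^{m-1}$'' is false for $m\geq4$: the Whitehead manifold $W$ is a contractible open $3$-manifold, not homeomorphic to $\R^3$, with $W\times\R\cong\R^4$. So already the first inductive step, passing from $g_1^{-1}(c_1)\times\Imm(g_1)\cong\R^n$ to $g_1^{-1}(c_1)\cong\R^{n-1}$, is unjustified when $n\geq4$, and the argument as written does not go through. (Your final descent $\Sigma\times\R\cong\R^3\Rightarrow\Sigma\cong\R^2$ is fine by the classification of surfaces; it is the intermediate stages that fail.)

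The fix --- and what the paper actually does --- is to carry only the weaker invariant of simple connectedness through the induction. Your stated reason for wanting $M=G_{n-3}^{-1}(c')$ contractible is to trivialize $g_{n-2}|_M$ over an interval, but Proposition \ref{pro:eq} only requires the \emph{base} $\Imm\bigl(g_{n-2}|_M\bigr)$ to be contractible, not the total space $M$; and that image is an interval of $\R$, hence contractible, as soon as $M$ is connected. Since $\pi_1(F\times I)\cong\pi_1(F)$ for $I$ an interval, simple connectedness of the total space at one stage passes to the fiber at the next, and that is all the induction needs. With this modification your argument coincides with the paper's.
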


\begin{proof}
Given $c_{1}\in\Imm (g_{1})$, since $BI(g_1)=\vazio$ and $\Imm (g_1)$ is an open interval of $\R$ and so it is contractible, it follows from Proposition \ref{pro:eq} that $g_{1}^{-1}(c_{1})\times\Imm( g_{1})$ is homeomorphic to $\R^n$. 
In particular, the level set $g_1^{-1}(c_{1})$ is simply connected. 

Now since $BI(g_{2}|_{g_1^{-1}(c_{1})})=\vazio$ and $\Imm (g_{2}|_{g_{1}^{-1}(c_{1})})$ is contractible, it follows again from Proposition \ref{pro:eq} that $(g_{2}|_{g_{1}^{-1}(c_{1})})^{-1}(c_{2})\times\Imm (g_{2}|_{g_{1}^{-1}(c_{1})}) $ is homeomorphic to $g_{1}^{-1}(c_{1})$ for all $c_{2}\in \Imm (g_{2}|_{g_{1}^{-1}(c_{1})})$. 
In particular, analogously to the above paragraph, we conclude that $G_{2}^{-1}(c_{1},c_{2}) = (g_{2}|_{g_{1}^{-1}(c_{1})})^{-1}(c_{2})$ is simply connected, for any $(c_{1},c_{2})\in \Imm(G_{2})$. 

From the simply connectedness of $G_{2}^{-1}(c_{1},c_{2})$ and hypotheses we similarly conclude the simply connectedness of $G_{3}^{-1}(c_{1},c_{2},c_{3}) = (g_{3}|_{G_{2}^{-1}(c_{1},c_{2})})^{-1}(c_{3})$. 

By repeating this process we inductively reach to the end of the proof. 
 
In particular the level sets $g^{-1}(c)$ are simply connected for all $c\in\Imm g$ and so $\F_g$ is by planes. 
\end{proof}

From Proposition \ref{pro:fibration} (respectively Proposition \ref{pro:fibration2}) it is clear that assuming (a$_h$) (respectively (a) or (b$_h$)) it follows that the foliation $\F_g$ is by planes. 

\subsection{Regularity conditions}

The bifurcation locus $B(g)$ of a map $g\colon \K^n \to \K^m$, $\K=\R$ or $\C$, $n\geq m$, is fully characterized only for real or complex polynomial maps in special cases (see for instance \cite{HL, JTC, Pa} for $\K=\C$ and \cite{CP, JTR, TZ} for $\K=\R$). 
In general one may control $B(g)$ by using certain criteria of regularity at infinity or asymptotic regularity.

Here, we begin recalling Palais-Smale condition. In the following, the notation $u_l\to\infty$ means that the sequence $\{u_l\}$ has no convergent subsequences. 
Also $\Sing(g)$ stands for the set of critical points of $g$. 

\begin{defi} \label{def:PS} 
Let $X$ be a $C^1$ Riemannian manifold and $g: X\to\R$ be a $C^1$ function. 
We say that $g$ satisfies the \emph{Palais-Smale condition} (in short \emph{PS condition}) if any sequence $\{u_l\}_{l\in\N}$ in $X$ such that $\{g(u_l)\}$ is bounded and $\nabla g(u_l)\to 0$ has a convergent subsequence. 
In the special case of $\Sing(g)=\vazio$, this is equivalent to: for each sequence $\{u_l\}_{l\in\N}$ in $X$ with $u_l\to\infty$ and $g(u_l) \to c$, $c \in \R$, there exists $\varepsilon > 0$ such that $||\nabla g(u_l)|| \geq \varepsilon$, where $||\cdot||$ is a Riemannian metric of $X$.
\end{defi}

Let us now recall a regularity condition introduced by Rabier in \cite{R2}, which is also known in the literature by Malgrange condition when $m=1$.

\begin{defi} For a linear map $A\colon\R^n\to\R^m$ we set
$$\nu(A):=\inf_{\|\psi\|=1}\|A^*(\psi)\|,$$
where $A^*$ is the adjoint map of $A$ and $\psi\in(\R^m)^*$.
\end{defi}

\begin{defi}\label{def:Malg}
Let $g\colon X\to\R^m$ be a $C^2$ map, where $X$ is a $C^2$ submanifold of $\R^n$ of dimension $k\geq m$. 
We define the set of \emph{asymptotic critical values of $g$ at infinity} by
\begin{align*}
K_{\infty}(g) := & \Big\{y\in\R^m\ | \ \exists \{ u_{l}\}_{l\in \N} \subset X, \,\, u_l\to\infty, \\
  & \phantom{\big\{ } \lim_{l\to\infty}g(u_l)=y \mathrm{\ and\ }\lim_{l\to\infty}\|u_l\|\nu(Dg (u_l)|_{T_{u_l}X})=0\Big\}.
\end{align*}
We say that the map $g$ satisfies  \emph{Rabier condition} if $K_\infty(g)=\vazio$.
\end{defi}

\begin{rem} 
There is a wide literature treating different regularity conditions and their comparisons, see for instance \cite{DRT,Ga,J2,KOS,ST2} as well as the references therein. 
\end{rem}

For $m=1$, it is clear that if $g$ satisfies PS condition then $K_\infty(g)=\vazio$. 
But the reciprocal fails, as shown by the counterexample given by P\u{a}unescu and Zaharia \cite{PZ}, $g(x,y,z) = x - 3 x^3 y^2 + 2 x^4 y^3 + y z$. 
The sequence $(l, 1/l, 0)$ proves that it does not satisfy PS condition. 
On the other hand, we have $K_\infty(g) = \vazio$,  because if $u_l = (x_l, y_l,z_l) \to \infty$ and $\|u_l\| \|\nabla g(u_l)\| \to 0$, it follows that $\nabla g(u_l) \to 0$ and so $\partial g(u_l)/\partial x = 1 - 9 (x_l y_l)^2 + 8 (x_l y_l)^3\to 0$, proving that $x_l y_l \nrightarrow 0$, and hence $\|u_l\| |\partial g(u_l)/\partial_z| \nrightarrow 0$, a contradiction with $\|u_l\| \|\nabla g(u_l)\| \to 0$.

In \cite{R2} Rabier proved the following result in a broader setting, namely for $C^2$ maps $g\colon M\to N$, where $M$ and $N$ are Finsler manifolds. 

\begin{teo}[{\cite[Rabier]{R2}}]\label{Rabier}
Let $g\colon X\to\R^m$ be a $C^2$ map, where  $X$ is a $C^2$ submanifold of $\R^n$ of dimension $k\geq m$. 
Then $$
B(g)\subset K_{\infty}(g)\cup g\left(\Sing (g)\right). 
$$
\end{teo}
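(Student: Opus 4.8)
\textbf{Proof plan for Theorem \ref{Rabier}.}
The plan is to show that if $y\notin K_\infty(g)\cup g(\Sing(g))$, then $y\notin B(g)$, i.e., $g$ is a locally trivial fibration over a neighborhood of $y$. First I would fix such a $y$ and observe that, since $y\notin g(\Sing(g))$, the fiber $g^{-1}(y)$ consists of regular points; moreover, since $y\notin K_\infty(g)$, there is no sequence $u_l\to\infty$ in $X$ with $g(u_l)\to y$ and $\|u_l\|\,\nu(Dg(u_l)|_{T_{u_l}X})\to 0$. Combining these two facts with a standard compactness/continuity argument, I would produce an open ball $B(y,\delta)\subset\R^m$, a constant $c>0$, and a radius $R>0$ such that on the tube $T:=g^{-1}(\overline{B(y,\delta)})\cap\{\|x\|\ge R\}$ one has the uniform lower bound $(1+\|x\|)\,\nu(Dg(x)|_{T_xX})\ge c$; shrinking $\delta$ further one also keeps the portion $g^{-1}(\overline{B(y,\delta)})\cap\{\|x\|\le R\}$ inside the regular locus.

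The key step is then to build a vector field on $g^{-1}(B(y,\delta))$ that integrates to the desired trivialization. For each coordinate direction $e_j$ of $\R^m$ I would choose, at each point $x$, the tangent vector $v_j(x)\in T_xX$ of minimal norm with $Dg(x)\cdot v_j(x)=e_j$; the quantity $\nu(Dg(x)|_{T_xX})$ is precisely what controls $\|v_j(x)\|$ from above, so the uniform estimate above gives $\|v_j(x)\|\le C(1+\|x\|)$ on the tube. This linear-growth bound is exactly the condition guaranteeing completeness of the flow: the integral curves cannot escape to infinity in finite time, by a Gronwall-type argument on $\frac{d}{dt}\|x(t)\|^2$. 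Smoothing $x\mapsto v_j(x)$ (using $C^2$ regularity of $g$ and a partition of unity, or Rabier's construction of a $C^1$ ``pseudo-gradient''-type section) so that the $v_j$ are locally Lipschitz and commute up to controlled error — or, more cleanly, integrating them one coordinate at a time — yields a local flow $\Phi\colon g^{-1}(y)\times B(y,\delta)\to g^{-1}(B(y,\delta))$ with $g\circ\Phi=pr_2$, which is the required homeomorphism. Hence $y\notin B(g)$.

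The main obstacle I expect is the completeness of the flow: one must leverage the factor $\|u_l\|$ (equivalently $1+\|x\|$) in the definition of $K_\infty(g)$ to get the linear growth estimate on the lifted vector fields, and then run the escape-time argument carefully on the submanifold $X$ (where the intrinsic and ambient norms must be reconciled). A secondary technical point is regularizing the minimal-norm lifts $v_j(x)$ without destroying the growth bound, and handling the case $m>1$ where one integrates the $m$ vector fields successively and must check that each successive flow stays in the good tube. All of this is carried out in \cite{R2} in the more general Finsler setting, so I would either cite that directly or reproduce the Euclidean-submanifold version of the argument sketched above.
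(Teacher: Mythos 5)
The paper offers no proof of Theorem \ref{Rabier}; it is quoted verbatim from Rabier \cite{R2}, who proves it for $C^2$ maps of complete Finsler manifolds, so there is no in-paper argument to compare against. Your sketch is a correct reconstruction of that proof in the Euclidean-submanifold setting, with the right key mechanisms: for $y\notin K_{\infty}(g)\cup g\left(\Sing(g)\right)$, a compactness argument gives a tube $g^{-1}(\overline{B(y,\delta)})$ free of critical points on which $(1+\|x\|)\,\nu(Dg(x)|_{T_xX})\ge c>0$; the minimal-norm lifts $v_j(x)$ of the coordinate directions satisfy $\|v_j(x)\|\le \nu(Dg(x)|_{T_xX})^{-1}\le c^{-1}(1+\|x\|)$, and this linear growth yields completeness of the flows by Gronwall; integrating the $m$ lifts successively needs no commutativity, since $g\circ\phi_j^{t}$ translates only the $j$-th coordinate, so one trivializes over a small cube around $y$. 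Two of the technical worries you flag actually disappear here: the Moore--Penrose lift $Dg(x)^*\left(Dg(x)Dg(x)^*\right)^{-1}e_j$ is already $C^1$ when $g$ is $C^2$, so no pseudo-gradient smoothing is needed, and $\nu$ is continuous in the matrix entries, so the lower bound on the bounded part of the tube follows from $y\notin g\left(\Sing(g)\right)$ by compactness. The one point your escape-time step would genuinely expose is that the argument requires $X$ to be closed in $\R^n$ (equivalently complete), since otherwise a trajectory can leave $X$ through its frontier in finite time and the inclusion fails as literally stated (take $X=(0,1)$, $g$ the inclusion into $\R$, $y=1$: then $1\in B(g)$ but $K_{\infty}(g)=\{0\}$ and $\Sing(g)=\vazio$); this completeness hypothesis is explicit in \cite{R2} and tacit here, and it holds for every $X$ ($\R^n$ or a fiber $G_{k}^{-1}(c)$) to which the paper applies the theorem.
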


This inclusion can be strict even for polynomial submersions, as can be viewed by the example of Henry King \cite[Example 3.4]{TZ}, that we will consider in detail in Example \ref{King} below. 

From this theorem and Proposition \ref{pro:fibration} (respectively Proposition \ref{pro:fibration2}), it follows that assuming (b), (c) or (c$_h$) (respectively (d$_h$)) then the foliation $\F_g$ is by planes.

The following lemmas are certainly well known. 
For a proof of Lemma \ref{lem:1} see, for instance, \cite[Lemma 2.2]{NX}. 
Lemma \ref{lem:2} is proved in \cite{R2} and in a more general setting in \cite[Theorem 1]{deM}.

\begin{lem}\label{lem:1} 
Let $Z\colon\R^n\to\R^n$ be a non-vanishing smooth vector field such that
$$
\int_{0}^{\infty}\inf_{\|x\|=r}\|Z(x)\|\, dr=\infty.
$$
Then the vector field ${Z(x)}/{\|Z(x)\|^2}$ is complete.
\end{lem}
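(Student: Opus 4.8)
The plan is to prove Lemma \ref{lem:1}, i.e.\ that the integral growth condition on a non-vanishing smooth vector field $Z$ forces the normalized field $W(x) := Z(x)/\|Z(x)\|^2$ to be complete. Recall that $W$ is a well-defined smooth vector field on all of $\R^n$ precisely because $Z$ never vanishes, and that completeness means every maximal integral curve of $W$ is defined for all $t\in\R$. The first step is to reduce completeness to an \emph{a priori} bound: by the standard escape-time lemma for ODEs, a maximal integral curve $\gamma\colon(a,b)\to\R^n$ with $b<\infty$ must leave every compact set as $t\to b^-$, i.e.\ $\|\gamma(t)\|\to\infty$; so it suffices to rule this out, and symmetrically for $a>-\infty$.

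Next I would estimate the speed at which $\|\gamma(t)\|$ can grow along an integral curve of $W$. Writing $r(t) = \|\gamma(t)\|$, one has
\[
\left|\frac{d}{dt}\,r(t)\right| = \left|\frac{\langle \gamma(t),\dot\gamma(t)\rangle}{\|\gamma(t)\|}\right| \le \|\dot\gamma(t)\| = \|W(\gamma(t))\| = \frac{1}{\|Z(\gamma(t))\|} \le \frac{1}{\displaystyle\inf_{\|x\|=r(t)}\|Z(x)\|}\,,
\]
so that, setting $\mu(r) := \inf_{\|x\|=r}\|Z(x)\|>0$, we get $\mu\bigl(r(t)\bigr)\,|r'(t)| \le 1$. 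Integrating along the curve from some base time $t_0$ to $t$ and changing variables $s = r(\tau)$ in the resulting integral (handling monotone pieces, or just bounding by the total variation) yields
\[
\int_{r(t_0)}^{r(t)} \mu(s)\,ds \;\le\; |t - t_0|\,.
\]
If $b<\infty$ and $r(t)\to\infty$ as $t\to b^-$, letting $t\to b^-$ gives $\int_{r(t_0)}^{\infty}\mu(s)\,ds \le b - t_0 < \infty$, contradicting the hypothesis $\int_0^\infty \mu(s)\,ds = \infty$. Hence $b=\infty$; the same argument with $a$ shows $a = -\infty$, so $\gamma$ is defined on all of $\R$ and $W$ is complete.

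The main obstacle, and the only point needing a little care, is making the change of variables $s=r(\tau)$ rigorous when $r(\tau)$ is not monotone: the clean way is to observe that since $\mu\ge 0$, the function $\Phi(r) := \int_0^r \mu(s)\,ds$ is nondecreasing, and by the chain rule $\bigl|\tfrac{d}{d\tau}\Phi(r(\tau))\bigr| = \mu(r(\tau))\,|r'(\tau)| \le 1$ wherever $r$ is differentiable; since $r(\tau)=\|\gamma(\tau)\|$ is locally Lipschitz (being the composition of the norm with a $C^1$ curve) this derivative bound holds a.e.\ and integrates to $|\Phi(r(t)) - \Phi(r(t_0))| \le |t-t_0|$, which is exactly the inequality above. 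Then $r(t)\to\infty$ would force $\Phi(r(t))\to\Phi(\infty)=+\infty$ by the divergence hypothesis, again contradicting the bound. This is essentially the argument in \cite[Lemma 2.2]{NX} cited in the excerpt, so I would either reproduce this short computation or simply invoke that reference.
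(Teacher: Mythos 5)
Your proof is correct, and the paper itself gives no proof of this lemma: it simply refers the reader to \cite[Lemma 2.2]{NX}, whose argument is exactly the one you reproduce (escape lemma plus integrating the bound $\mu(r(t))\,|r'(t)|\le 1$ via the nondecreasing primitive $\Phi$). Your handling of the non-monotonicity of $r(t)$ through the locally Lipschitz function $\Phi\circ r$ is the right way to make the change of variables rigorous, so nothing is missing.
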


\begin{lem}\label{lem:2} 
Let $X$ be $C^2$ manifold. 
If $h\colon X\to\R$ is a $C^2$ function, $\nabla h(x)$ is non-vanishing for all $x\in X$ and ${\nabla h(x)}/{\|\nabla h(x)\|^2}$ is a complete vector field, then $B(h)=\vazio$. 
\end{lem}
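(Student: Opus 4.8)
The plan is to show directly that $h\colon X\to\R$ is a trivial fibration in the sense of Definition \ref{def:bv}, which in particular gives $B(h)=\vazio$. Since $\nabla h$ is non-vanishing, $h$ is a $C^2$ submersion, so every $c\in\R$ is a regular value and each level set $h^{-1}(c)$ is a $C^2$ submanifold of $X$ (possibly a union of components, which causes no trouble). Consider the $C^1$ vector field $W(x)=\nabla h(x)/\|\nabla h(x)\|^2$; by hypothesis $W$ is complete, so its flow $\phi\colon\R\times X\to X$, $(t,x)\mapsto\phi_t(x)$, is defined for all $t\in\R$. The key identity is $dh_x(W(x))=\langle\nabla h(x),W(x)\rangle=1$, whence $\frac{d}{dt}h(\phi_t(x))=1$ and therefore $h(\phi_t(x))=h(x)+t$ for all $t\in\R$ and all $x\in X$. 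In particular every real number is a value of $h$, so the level sets below are nonempty.

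Next I would fix $c\in\R$, set $\mathcal F:=h^{-1}(c)$, and define $\varphi\colon\mathcal F\times\R\to X$ by $\varphi(x,t)=\phi_t(x)$. Then $h(\varphi(x,t))=c+t$, so after the obvious identification of the target with $\R$, the composition $h\circ\varphi$ is the projection $pr_2$ onto the second factor. The map $\varphi$ is a homeomorphism: it is continuous because $\phi$ is continuous (as $W$ is $C^1$), and its inverse is $y\mapsto\bigl(\phi_{c-h(y)}(y),\,h(y)-c\bigr)$, which is well defined since $h\bigl(\phi_{c-h(y)}(y)\bigr)=h(y)+(c-h(y))=c$, so $\phi_{c-h(y)}(y)\in\mathcal F$, and is continuous again by continuity of $\phi$ and of $h$. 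That these two maps are mutually inverse follows at once from $\phi_0=\mathrm{id}$ and the group law $\phi_s\circ\phi_t=\phi_{s+t}$. Hence $h\colon X\to\R$ is a trivial fibration with fiber $\mathcal F$, so it is a locally trivial fibration at every point of $\R$, that is, $B(h)=\vazio$.

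I do not expect a genuine obstacle here; the only points needing care are that each value of $h$ is attained (so the trivialization covers all of $X$) and that $W$ is regular enough for the flow to behave well — both immediate above, the first from completeness of $W$ via $h(\phi_t(x))=h(x)+t$, and the second because $h\in C^2$ forces $W\in C^1$, so $\phi$ is $C^1$ and in particular continuous, which is all Definition \ref{def:bv} requires. As an alternative one could combine Proposition \ref{pro:eq} with local triviality obtained from flow boxes of $W$, but the explicit global construction above is shorter and self-contained.
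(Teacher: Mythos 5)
Your argument is correct. Note, though, that the paper does not actually prove Lemma \ref{lem:2}: it only cites \cite{R2} and, in greater generality, \cite[Theorem 1]{deM}. What you have written is precisely the standard direct proof underlying those references: the normalized gradient flow $\phi_t$ of $W=\nabla h/\|\nabla h\|^2$ satisfies $h(\phi_t(x))=h(x)+t$, and completeness turns this into a global trivialization $\mathcal F\times\R\cong X$ over the whole line, which is stronger than the mere emptiness of $B(h)$ and also shows every value of $h$ is attained. All the steps check out: $W$ is $C^1$ because $h$ is $C^2$ and $\|\nabla h\|^2$ is nowhere zero, so the flow exists, is jointly continuous, and obeys the group law; your explicit inverse $y\mapsto\bigl(\phi_{c-h(y)}(y),\,h(y)-c\bigr)$ is well defined and continuous. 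The only cosmetic point is the "identification of the target with $\R$": to match Definition \ref{def:bv} literally you should take $\varphi(x,t)=\phi_{t-c}(x)$, so that $h\circ\varphi(x,t)=t=pr_2(x,t)$ on the nose rather than $c+t$. With that adjustment the proof is complete and self-contained, and it usefully supplies an argument the paper leaves to the literature.
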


From lemmas \ref{lem:1} and \ref{lem:2} together with Proposition \ref{pro:fibration} (respectively Proposition \ref{pro:fibration2}), it follows that (d) (respectively (e$_h$)) implies that $\F_g$ is by planes. 

\subsection{Bidimensional semi-algebraic Jelonek's conjecture}
We finish this section with a proof of a semi-algebraic version of Conjecture \ref{JeCo} in dimension $2$, as we did not find one in the literature.  
\begin{pro}
Let $f\colon\R^2\to\R^2$ be a $C^1$ semi-algebraic map with nowhere zero Jacobian determinant. 
If $\codim(S_f)\geq 2$, then $f$ is bijective. 
\end{pro}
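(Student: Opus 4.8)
The plan is to follow the same strategy used by Jelonek in the polynomial case, but carried out in the semi-algebraic category, where the essential inputs still hold. First I would note that since $f$ is a local diffeomorphism (nowhere zero Jacobian), $f$ is an open map, so $f(\R^2)$ is open in $\R^2$; I would also recall that a semi-algebraic local diffeomorphism with $\codim(S_f)\geq 2$ is known to be injective (this is the $\codim(S_f)\geq 3$-type statement specialized to $n=2$, where $\codim S_f\geq 2$ forces $S_f$ finite, mentioned earlier in the excerpt as following from Jelonek's proof). So the real content is surjectivity: I must show $f(\R^2)=\R^2$.

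Next I would argue that $f(\R^2)$ is also closed in $\R^2$, so that by connectedness $f(\R^2)=\R^2$. The key point is the behaviour of $f$ near its non-proper locus: since $S_f$ has codimension $\geq 2$ in $\R^2$, it is a finite set of points $\{y_1,\dots,y_r\}$. If $y\in f(\R^2)$ is a limit of values $f(x_k)$ with $x_k\to\infty$ (so that closedness could fail at $y$), then by definition $y\in S_f$. Hence the only candidate boundary points of $f(\R^2)$ that are not attained via a proper behaviour lie in the finite set $S_f$. I would then show that $f(\R^2)$ cannot have an isolated "puncture'': the complement $\R^2\setminus f(\R^2)$ is contained in the closure of $S_f\cap \partial f(\R^2)$, a finite set, while $f(\R^2)$ is open; a bounded (here: finite) set cannot be the complement of an open connected proper subset of $\R^2$ whose boundary is finite, because removing finitely many points from $\R^2$ leaves a connected open set, and one checks using the injectivity of $f$ and invariance of domain that $f$ would then be a homeomorphism onto $\R^2$ minus finitely many points, which is impossible for topological reasons (e.g. $f$ restricted to a large circle would be a degree-one loop bounding a disk mapped into a non-simply-connected target). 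Concretely, I would use that an injective local homeomorphism $\R^2\to \R^2\setminus\{y_1,\dots,y_r\}$ which is proper onto its image cannot exist when $r\geq 1$, by a simple $\pi_1$ or Euler characteristic argument.

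More cleanly, I expect the argument to run: let $V=f(\R^2)$, which is open and, by injectivity plus invariance of domain, $f\colon\R^2\to V$ is a homeomorphism. If $V\neq\R^2$, pick $y_0\in\partial V$. Since $f$ is proper at every point of $\R^2\setminus S_f$ and $S_f$ is finite, $\partial V\subset S_f$, so $\partial V$ is finite; thus $V=\R^2\setminus\{$finite set$\}$ with the finite set nonempty. But $\R^2$ is not homeomorphic to $\R^2$ minus a nonempty finite set (the latter has nontrivial fundamental group, or: is not simply connected), contradicting that $f$ is a homeomorphism from $\R^2$ onto $V$. Hence $V=\R^2$ and $f$ is bijective.

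The main obstacle, and the step I would be most careful about, is justifying that $\partial f(\R^2)\subset S_f$ in the semi-algebraic (rather than polynomial) setting, i.e. that a boundary value of the image is necessarily a non-properness point — this uses that $f$ being proper at $y$ forces a neighborhood of $y$ to be saturated inside the image when combined with openness, so a boundary point fails properness. I would also need the preliminary fact, stated as known in the excerpt, that semi-algebraic local diffeomorphisms with $\codim(S_f)\geq 3$ (and the $n=2$ borderline case $\codim(S_f)\geq 2$) are injective; granting that, the surjectivity-via-closedness argument above completes the proof, and the only genuinely topological input is the non-homeomorphism of $\R^2$ with a punctured plane.
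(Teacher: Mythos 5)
Your surjectivity argument is essentially sound: the observation that $\partial f(\R^2)\subset S_f$ (if $f$ were proper at a boundary point of the open image, a full neighbourhood of that point would be forced into the image), hence that $f(\R^2)$ would have to be $\R^2$ minus a nonempty finite set, combined with the fact that $\R^2$ is not homeomorphic to a finitely punctured plane, correctly rules out non-surjectivity \emph{once injectivity is available}. The genuine gap is the injectivity step, which you dispose of by citing ``the $\codim(S_f)\geq 3$-type statement specialized to $n=2$.'' That citation does not apply: for a nonempty semi-algebraic subset of $\R^2$, $\codim(S_f)\geq 3$ forces $S_f=\vazio$, so the quoted result specialized to $n=2$ only covers the case where $f$ is already proper, where everything follows from Hadamard. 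The case you actually need, $S_f$ a nonempty finite set, is exactly the content of the proposition, and the paper states explicitly that it proves this two-dimensional semi-algebraic statement because no proof was found in the literature. Assuming injectivity is therefore circular: ``bijective'' includes ``injective,'' and without it your final step ($f\colon\R^2\to V$ is a homeomorphism, so $V$ cannot be a punctured plane) collapses, since a non-injective local homeomorphism onto a punctured plane is not excluded by that argument alone (indeed $\R^2$ does cover $\R^2\setminus\{0\}$).

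The paper takes a different route that yields injectivity and surjectivity simultaneously: it shows directly that $S_f=\vazio$ and then invokes Hadamard's theorem. Concretely, for a large circle $S_R$ enclosing the finite set $S_f$, the preimage $f^{-1}(S_R)$ is a compact $1$-manifold whose components $C_j$ are circles; a maximum-principle argument (the function $x\mapsto\|f(x)\|^2$ has no interior local maximum because $Df$ is everywhere invertible) together with an open-and-closed argument shows that each bounded Jordan domain $L_j$ satisfies $f(L_j)=B_R$; Alexander duality then gives $f^{-1}\left(\overline{B_R}\right)=\overline{\cup_j L_j}$, which is compact, so $f$ is proper over a neighbourhood of $S_f$, forcing $S_f=\vazio$. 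To rescue your outline you would need either to actually prove injectivity when $S_f$ is finite and nonempty (the hard part), or to replace the homeomorphism argument by a properness/covering-space argument; the latter would require $S_f\cap f(\R^2)=\vazio$, which is also not automatic.
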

\begin{proof} 
The objective of the proof is to conclude that $S_f$ is the empty set, so $f$ will be bijective by the Hadamard classical invertibility result. 

From the assumptions it follows that $\dim(S_f) = 0$, and so $S_f$ is a finite set. 
Let $R_1>0$ big enough such that the ball centered in $0$ with radius $R_1$, $B_{R_1}$, has $S_f$ as a subset. 
For any $R>R_1$, the circle centered in $0$ with radius $R$, $S_R$, is such that $f^{-1}(S_R)$ is a manifold of dimension $1$, and so, as $S_R$ does not intersect $S_f$, it follows that the connected components of $f^{-1}(S_R)$, say $C_1, \ldots, C_k$, are all homeomorphic to a circle. 

For each $j=1,\ldots, k$, it follows by the Jordan curve theorem that $\R^2\setminus C_j$ has two connected components, one bounded and another unbounded. 
We denote by $L_j$ the bounded one and by $U_j$ the unbounded one. 

We claim that $f(L_j) = B_R$. 
Indeed, $f(L_j) \subset B_R$ because otherwise there would exist $x_0 \in L_j$ such that  $\|f(x_0)\|  \geq R$. 
Then, as $L_j \cup C_j$ is a compact set and $\|f(x)\| = R$ in $C_j$, it follows that the function $L_j  \cup C_j \ni x \to \|f(x)\|^2 \in \R$ has a local maximum with value $\geq R$ in the open set $L_j$, a contradiction with the fact that the Jacobian matrix of $f$ is invertible in each point. 
Further, as $L_j$ is an open set and $f$ is an open mapping, we have that $f(L_J)$ is an open set, and hence $f(L_j)$ is an open set of $B_R$. 
Now we will prove that  $f(L_j)$ is a closed set of $B_R$, then, as $B_R$ is a connected set, it follows that $f(L_j) = B_R$ as claimed. 
Let $\{f(x_n)\}$ be a sequence of $f(L_j)$ such that $\lim f(x_n) = y \in B_R$. 
As $L_j \cup C_j$ is a compact set, we may consider a subsequence $\{x_{n_k}\}$ of $\{x_{n}\}$ such that $\lim x_{n_k} = x \in L_j \cup C_j$. 
Since $f$ is continuous we have $\lim f(x_{n_k}) = f(x)$ and so $y = f(x)$. 
Since we have taken $y \in B_R$ and $f(C_j) \subset S_R$, it follows that $x \in L_j$ and so $y\in f(L_j)$. 

Let $X=\overline{\cup_{j=1}^k L_j}$. 
Clearly $f(\R^2 \setminus X) \not \subset B_R$, otherwise $f(\R^2) \subset\overline{B_R}$ and so $S_f$ would not be a finite set. 
Since the set $\R^2 \setminus X$ is connected from Alexander's duality, it follows that $f^{-1} (\overline{B_R}) = X$. 
But as $X$ is compact and $S_f \subset B_R$, it follows that $S_f$ must be the empty set. 
\end{proof}

\section{Examples}\label{examples}

Let $f = (f_1,f_2)\colon\R^2\to\R^2$ be a $C^1$ local diffeomorphism. 
It is well known that assuming (a), (b), (c) or (d) on $f_1$ (without any other hypothesis) it follows that $f$ is injective, see for instance  \cite{GJLT, deM, S,R2}. 

On the other hand, for $n>2$, only conditions (a) -- (d) are not sufficient to the injectivity. 
More precisely, our first example exhibits a non injective polynomial local diffeomorphism of $\R^3$ with each coordinate functions satisfying (a), (b), (c) and (d). 
So the condition about the set of non-proper points in corollaries \ref{pro:main1} and \ref{pro:main2} can not be  removed for $n>2$. 

\begin{ex}\label{cex:dim3}
Consider the already mentioned non-injective polynomial map $p=(p_1,p_2)\colon\R^2\to\R^2$ with nowhere zero Jacobian determinant, given by Pinchuk in \cite{Pi}. 
We have $S_p\neq\vazio$. 
So by \cite[Theorem 6.4]{J} it follows that $\codim (S_p) = 1$. 
Let $f = (f_1,f_2,f_3)\colon\R^3\to\R^3$ be the polynomial map defined by 
$$
f(x_1,x_2,x_3) = (x_3 + p_1(x_1,x_2), x_3 + p_2(x_1,x_2), x_3). 
$$
Clearly $f$ is a non-injective polynomial map with nowhere zero Jacobian determinant. 
Moreover, since $\|\nabla f_i\| \geq 1$, it follows that $f_i$ satisfies the integral condition (d), the PS condition as well as $K_{\infty}(f_i)=\vazio$, and hence from Theorem \ref{Rabier}, $f_i$ is a locally trivial fibration, for $i = 1,2,3$. 
\end{ex}

Now we compare some of the conditions ensuring global injectivity in this paper. 
We begin remarking that, under the assumption $\codim (S_f) \geq 2$ on a $C^2$ semi-algebraic local diffeomorphism $f: \R^3 \to \R^3$, it follows from the proof of Proposition \ref{pro:equiv} that the topological condition \eqref{eq} is equivalent to condition (a) applied to each coordinate of $f$. 
On the other hand, the following example shows that conditions (b), (c) and (d) applied to the coordinates of $f$ are stronger than \eqref{eq} (and consequently stronger than (a)). 

\begin{ex}\label{King}
Consider the Henry King's example $h(x_1, x_2) = x_2 (2x_1^2x_2^2 - 9x_1 x_2 + 12)$ and define, analogously to \cite[Example 3.4]{TZ}, the map 
$$
\widetilde{f}(x_1,x_2) = \left(h(x_1, x_2), \frac{x_1}{2x_1^2x_2^2 - 9x_1 x_2 + 12} \right). 
$$
It is simple to see that $\widetilde{f}\circ \widetilde{f}(x_1,x_2) = (x_1,x_2)$, and so $\widetilde{f}$ is a global diffeomorphism of $\R^2$. 
The map $f(x_1,x_2,x_3) = \left(\widetilde{f}(x_1,x_2), x_3\right)$ is then a global diffeomorphism of $\R^3$ (thus $S_f = \vazio$). 
Hence $f_j$, $j=1,2,3$, are locally trivial fibrations and so condition \eqref{eq} is also valid. 

On the other hand, $0\in K_\infty(h)$ (and so $h$ does not satisfy PS condition), because the sequence $u_l = (l, 1/l)$ is such that $h(u_l)\to 0$ and $\|u_l\|\|\nabla h(u_l)\| \rightarrow 0$. 
Further, $\int_{0}^{\infty}\inf_{\|x\|=r}\|\nabla h(x_1,x_2)\|dr < \infty$, because $\inf_{\|x\|=r} \|\nabla h(x_1, x_2)\| \leq \|\nabla h(t,1/t)\| = 5/t^2 \leq 10/r^2$ (for $t^2 = \left(r^2+\sqrt{r^4-4}\right)/2$ which satisfies $t^2+1/t^2 = r^2$). 
Then $f_1(x_1,x_2,x_3) = \widetilde{f}_1(x_1,x_2) = h(x_1, x_2)$ satisfies neither $K_\infty(f_1)=\vazio$ (and consequently PS condition) nor condition (d). 
\end{ex}

With the following examples we conclude that, even for semi-algebraic local diffeomorphisms $f\colon\R^3\to\R^3$ with $\codim S_f\geq2$, the spectral condition of Corollary \ref{cor:FMS} can have no relation with conditions (a), (b), (c) and (d) applied to the coordinates of $f$. 

\begin{ex}\label{ex:PSnoSpec} 
Let $f=(f_1,f_2,f_3)\colon\R^3\to\R^3$ be the polynomial map 
$f(x_1,x_2,x_3) = \left(x_2-x_1^2,x_1,x_3\right)$. 
This is a global diffeomorphism whose inverse is $f^{-1}(x_1,x_2,x_3) = \left(x_2, x_1 + x_2^2, x_3 \right)$. 
Thus $S_f = \vazio$. 
Since $\left|\nabla f_i\right|\geq1$, we have that $f_i$ satisfies the integral condition (d) as well as the PS condition, and consequently $K_\infty(f_i)=\vazio$ and $f_i$ is locally trivial fibration, for all $i = 1, 2, 3$. 
On the other hand the eigenvalues of the Jacobian matrix $Df(x_1,x_2,x_3)$ are given by $\lambda_1=1$ and $\lambda_{\pm}(x_1,x_2,x_3)=-x_1\pm\sqrt{x_1^2+1}$, hence  $\spec(f) = \R\setminus\{0\}$, i.e., $f$ does not satisfy the spectral condition of Corollary \ref{cor:FMS}. 
\end{ex}

The following example appeared in \cite{FMS}. 

\begin{ex}\label{ex:SpecnoPS}
Let $f=(f_1,f_2,f_3)\colon\R^3\to\R^3$ be defined by
$$
f(x_1,x_2,x_3) = \left(\sqrt{1+x_1^2}-(x_2^2+x_3^2+1)x_1,x_2,x_3 \right).
$$ 
This is a smooth semi-algebraic map such that $\spec(f) = (-\infty,0) \cup \{1\}$, and so $f$ satisfies the spectral condition of Corollary \ref{cor:FMS}. 
Hence $f$ is globally injective. 
Here $\Imm f = \R^3\setminus E$ where $E=\{(x_1,0,0)\ | \ x_1\leq0\}$ and so $S_f = E$ has dimension $1$. 
But since $\F_{f_1}$ is not by planes (for instance $f_1^{-1}(0)$ is connected but not simply connected) it follows that $f_1$ does not satisfy any of the conditions (a) -- (d). 
\end{ex}

\section{Compositions with linear isomorphisms}\label{sec:rotation} 

Let $A: \R^n \to \R^n$ be a $C^1$ diffeomorphism. 
First we compose through the right. 

Let $f = (f_1, \ldots, f_n): \R^n \to \R^n$ be a local homeomorphism. 
Since for each $k = 1,\ldots, n-1$ the foliations $\F_{F_{i_k}}$ and $\F_{F_{i_k}\circ A}$ are conjugated, because $\{x\in\R^n\ |\ F_{\cki{k}}\circ A (x)=c\} = A^{-1}\left(\{x\in\R^n\ |\ F_{\cki{k}}(x)=c\}\right)$ for all $c\in\R^k$, it follows in particular that $\F_{F_{i_{n-2}}}$ is by planes if and only if $\F_{F_{i_{n-2}}\circ A}$ is by planes. 

Also given $c \in \R^k$, $U$ a neighborhood of $c$ and $\phi \colon F_{\cki{k}}^{-1}(c)\times U\to F_{\cki{k}}^{-1}(U)$ a homeomorphism such that $F_{\cki{k}}\circ \phi = pr_2$, by taking $\varphi = \overline{A}^{-1} \circ \phi \circ (\overline{A} \times id)$, where $\overline{A}$ is the restriction of $A$ to the set $A^{-1}(F_{\cki{k}}^{-1}(U))$ and $id$ is the identity in $U$, it follows that $\varphi$ is a homeomorphism from $(F_{\cki{k}} \circ A)^{-1}(c) \times U$ onto $(F_{\cki{k}}\circ A)^{-1} (U)$ such that $(F_{\cki{k}} \circ A) \circ \varphi = pr_2$. 
This proves that $F_{\cki{k}}$ is a locally trivial fibration if and only if $F_{\cki{k}}\circ A$ is a locally trivial fibration. 
Similar reasonings verify this to locally trivial fibrations at the image. 

Now in the particular case of $A$ being a linear isomorphism of $\R^n$, it is also true that a component function $f_i$ of a $C^1$ map $f = (f_1, \ldots, f_n )$ satisfies PS condition if and only if $(f\circ A)_i = f_i \circ A$ does.
Indeed, this is a direct consequence of the chain rule $\nabla (f_i\circ A)(x) = \nabla f_i(A(x))\cdot A$. 
It follows analogously that $K_\infty(F_{\cki{k}})=\vazio$ if and only if $K_\infty(f\circ A)_{\cki{k}}=\vazio$. 
This may be false if $A$ is not linear. 
In fact, let $\widetilde{f}$ be the map defined in Example \ref{King}. 
As proved there, $\widetilde{f}_1$ satisfies neither $K_\infty(\widetilde{f}_1)=\vazio$ nor PS condition. 
But after the change of variables $A = \widetilde{f}$, we have $\widetilde{f}_1 \circ A(x_1,x_2) = x_1$, which obviously satisfies $K_\infty(\widetilde{f}_1 \circ A)=\vazio$ and PS conditions.

On the other hand, composing through the left is more involved, as the next example shows 

\begin{ex}\label{cex:1} 
Let $f = (f_1,f_2,f_3)\colon\R^3\to \R^3$ be defined by
\begin{equation*}\label{mapF}
f(x_1,x_2,x_3) = \left(x_1,x_2,\sqrt{2}s(x_3)-\sqrt{x_1^2+x_2^2+s(x_3)^2}\right), 
\end{equation*}
where $s\colon\R\to(0,\infty)$ is the $C^\infty$ semi-algebraic increasing diffeomorphism given by $s(t)=\sqrt{1+t^2}+t$. 
It follows that $f$ is a $C^{\infty}$ semi-algebraic map satisfying, for all $x = (x_1,x_2,x_3)\in\R^3$, 
$$
\det Df(x) = \frac{\partial f_3}{\partial x_3}(x)=\left(\sqrt2-\displaystyle\frac{1}{\sqrt{(x_1^2+x_2^2)s(x_3)^{-2}+1}}\right)s'(x_3)>0,
$$ 
and so $f$ is a local diffeomorphism. 
Moreover, $\partial f_3(x)/\partial x_3 > 0$ also implies that for each $(x_1,x_2)$ the map $\R\ni x_3\mapsto f_3(x_1,x_2,x_3)$ is injective, hence $f$ is injective and so a global diffeomorphism from $\R^3$ onto its image $f(\R^3)$. 

Since $\lim_{x_3\to-\infty}s(x_3)=0$ and $\lim_{x_3\to \infty}s(x_3)=\infty$, it follows that for each $(x_1,x_2)$ 
$$
\lim_{x_3\to-\infty}f_3(x_1,x_2,x_3)=-\sqrt{x_1^2+x_2^2},
$$ 
and
$$
 \lim_{x_3\to\infty}f_3(x_1,x_2,x_3)=\lim_{x_3\to \infty}\left(\sqrt{2}-\sqrt{\frac{x_1^2+x_2^2}{s(x_3)^{2}}+1}\ \right)s(x_3)=\infty,
$$
this shows that
\begin{equation*}\label{ceeq1}
f\left(\R^3\right)=\left\{(x_1,x_2,x_3)\in\R^3\ |\ x_3>-\sqrt{x_1^2+x_2^2}\right\}.
\end{equation*}
It is clear that the plane $\{x_3=c\}$ intersects $f(\R^3)$ in a connected non simply connected set if $c\leq0$ and in a simply connected set if $c>0$. 
Thus $f_3^{-1}(c)$ is a connected non-simply connected set for $c\leq0$ and it is a simply connected set for $c > 0$. 
That is, $f$ does not satisfy \eqref{eq}, because $\F_{f_3}$ is not by planes. Consequently, by Proposition \ref{pro:fibration}, $B(f_3)\neq\vazio$, by Theorem \ref{Rabier}, $K_\infty(f_3)\neq\vazio$ and by lemmas \ref{lem:1} and \ref{lem:2} the integral condition (d) is also not satisfied.

Now taking the linear isomorphism of $\R^3$ induced by the matrix
\begin{equation*}\label{hdt67}
A=\left(
\begin{array}{ccc}
1      & 0      & 0\\
0      & 1      & 0\\
1      & 0     & -\frac{1}{2}
\end{array}
\right),
\end{equation*}
and defining
$
g(x) \hspace{-.05cm} = \hspace{-.05cm} A\circ f(x_1,x_2,x_3) \hspace{-.05cm} = \hspace{-.05cm} \left(x_1, x_2, x_1 -\frac{\sqrt{2}}{2}s(x_3)+\frac{1}{2}\sqrt{x_1^2+x_2^2+s(x_3)^{2}}\right),
$
we observe that
$$
\partial_1 g_3(x_1,x_2,x_3)=1+\frac{1}{2}\frac{x_1}{\sqrt{x_1^2+x_2^2+s(x_3)^{2}}}>\frac{1}{2},
$$
and so $\|\nabla g_3(x_1,x_2,x_3)\|>1/2$, for each $(x_1,x_2,x_3)\in\R^3$. 
Hence $g_3$ satisfies PS condition and, in particular, $K_\infty(g_3) = \vazio$ and also satisfies the integral condition (d). 
It is clear that the same occurs with $g_1$ and $g_2$. 
In particular, by Theorem \ref{Rabier}, $B(g_i)=\vazio$ and, by Proposition \ref{pro:fibration}, $\F_{g_i}$ is by planes for all $i\in\{1,2,3\}$. 
\end{ex}

With the comments above about invariance by composing through the right and the Example \ref{cex:1} we may conclude that, contrary to what happens with spectral hypotheses, the PS condition, $K_\infty(\cdot)=\vazio$, the integral condition (d), the locally trivial fibrations and included the topological condition \eqref{eq} used in the global injectivity results presented in Theorem \ref{teo:FMS} and corollaries \ref{pro:main1} and \ref{pro:main2} are not invariant if we conjugate $f$ by a linear isomorphism. 

\cite[Lemma 8]{MV-S} claims that condition \eqref{eq} is invariant by composing through the left by linear isomorphisms, in contrast with Example \ref{cex:1}. 
Although this lemma was used to prove Theorem 1 of \cite{MV-S}, the result remains true: 
\begin{teo}[Theorem 1 in \cite{MV-S}]
\label{teo:injectivityn} Let $f=(f_1,\ldots,f_n):\R^n\to\R^n$ be a polynomial local diffeomorphism. 
Then the leaves of $\F_{i_1\ldots i_{n-2}}$ are simply connected, for all $(n-2)$-combinations $\{i_1,\ldots,i_{n-2}\}$ of $\{1,\ldots,n\}$, and $\codim(S_F)\geq 2$ if, and only if, $f$ is a bijection.
\end{teo}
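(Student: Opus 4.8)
I would prove the two implications separately and --- this being the point of re-proving the statement --- arrange things so that no ``invariance of \eqref{eq} under left composition by linear isomorphisms'' is ever invoked, thereby avoiding \cite[Lemma 8]{MV-S}, whose failure is exhibited by Example \ref{cex:1}. The substantive implication, that simple connectedness of all the leaves together with $\codim(S_f)\geq 2$ forces bijectivity, will be read off directly from Theorem \ref{teo:FMS}; the converse is elementary and is already implicit in the proof of Proposition \ref{pro:equiv}.

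\textbf{Bijectivity $\Rightarrow$ the two conditions.} Suppose $f$ is bijective. Being a continuous bijection of $\R^n$, it is a homeomorphism by invariance of domain. First, a homeomorphism of $\R^n$ is a proper map: if $K$ is compact then $f^{-1}(K)$ is the image of $K$ under the continuous map $f^{-1}$, hence compact; so $S_f=\vazio$ and $\codim(S_f)\geq 2$ holds with the usual convention for the empty set. Second, fix an $(n-2)$-combination $\{i_1,\ldots,i_{n-2}\}$, let $\pi\colon\R^n\to\R^{n-2}$ be the projection onto the coordinates indexed by $i_1,\ldots,i_{n-2}$, and note $F_{i_{n-2}}=\pi\circ f$. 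Then for each $c\in\R^{n-2}$ we have $F_{i_{n-2}}^{-1}(c)=f^{-1}\bigl(\pi^{-1}(c)\bigr)$, which is the image under the homeomorphism $f^{-1}$ of the affine $2$-plane $\pi^{-1}(c)\cong\R^2$; hence each leaf of $\F_{i_1\ldots i_{n-2}}$ is homeomorphic to $\R^2$, i.e.\ simply connected. (Equivalently, the explicit trivialization written in the proof of Proposition \ref{pro:equiv} shows $F_{i_{n-2}}$ is a trivial fibration, and Proposition \ref{pro:fibration} then gives the conclusion.)

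\textbf{The two conditions $\Rightarrow$ bijectivity.} Conversely, assume all leaves of $\F_{i_1\ldots i_{n-2}}$ are simply connected for every $(n-2)$-combination, and $\codim(S_f)\geq 2$. A polynomial map is semi-algebraic, so $f$ is a semi-algebraic local diffeomorphism and the leaf hypothesis is precisely condition \eqref{eq}. For $n\geq 3$, Theorem \ref{teo:FMS} then applies and yields that $f$ is bijective; for $n=2$ the hypothesis on leaves is vacuous and bijectivity is the bidimensional semi-algebraic Jelonek statement recorded in Section \ref{sec:results}. In either case the argument runs through these results directly, so it never uses \cite[Lemma 8]{MV-S}, and the theorem stands.

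\textbf{On the main difficulty.} There is essentially no obstacle: the content lies entirely in Theorem \ref{teo:FMS} (hence in \cite{FMS}), and the only care needed is bookkeeping --- the convention $\codim(\vazio)\geq 2$ and the reading of the statement when $n=2$ (no deleted coordinates, unique leaf $\R^2$). The single point the write-up must make explicit is that routing through Theorem \ref{teo:FMS} rather than through a normal-form reduction sidesteps the incorrect left-composition invariance of \eqref{eq} claimed in \cite[Lemma 8]{MV-S}.
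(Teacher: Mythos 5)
Your argument is correct within the logic of this paper, but it takes a genuinely different route from the one the paper adopts. For the nontrivial direction you observe that a polynomial map is semi-algebraic, that the leaf hypothesis is exactly condition \eqref{eq}, and that Theorem \ref{teo:FMS} then yields bijectivity; the easy direction ($f$ bijective implies $S_f=\vazio$ and each leaf is the homeomorphic image of an affine $2$-plane under $f^{-1}$) matches the paper's first paragraph. The paper instead goes back into the original proof of \cite[Theorem 1]{MV-S}, identifies the exact step where the false \cite[Lemma 8]{MV-S} was invoked (repositioning $S_f$ by a left composition with a linear isomorphism so that the first $n-2$ coordinate hyperplanes become transversal to $T_pS_f$ at a smooth point $p$ of $S_f$), and replaces it: since the hypothesis is assumed for \emph{all} $(n-2)$-combinations, one may leave $S_f$ in place and instead choose indices $i_1<\cdots<i_{n-2}$ so that the hyperplanes $\{x_{i_j}=p_{i_j}\}$ are transversal to $T_pS_f$, by completing a basis of $T_pS_f$ with two canonical vectors $e_{i_{n-1}},e_{i_n}$. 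What your route buys is brevity: the theorem becomes a one-line corollary of Theorem \ref{teo:FMS}. What the paper's route buys is independence from \cite{FMS}: your reduction constitutes a genuine repair of the literature only if the proof of Theorem \ref{teo:FMS} given in \cite{FMS} does not itself rely on \cite[Theorem 1]{MV-S} or reproduce the same repositioning step (the two papers share authors and the semi-algebraic argument follows the polynomial one closely), so you should at least record that caveat; the paper's patch salvages the original argument directly and therefore works regardless. A minor additional point: the statement only makes sense for $n\geq3$, since there are no $(n-2)$-combinations of coordinate functions to retain when $n=2$, so your appeal to the bidimensional semi-algebraic Jelonek statement is unnecessary, though harmless.
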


\begin{proof}[An alternative proof to Theorem \ref{teo:injectivityn}]
It is clear that if $f$ is a bijection, then $f^{-1}$ is a diffeomorphism and so each leaf of $\F_{i_1\ldots i_{n-2}}$ is the image of a plane through $f^{-1}$, hence simply connected. 
Also $S_f = \vazio$. 

To prove the converse result in the original paper \cite{MV-S}, it was assumed, by contradiction, that $f$ was not injective, which implies that the set of non-proper points $S_f$ is non-empty and has dimension $n-2$. 
Then by letting a smooth point $p = (p_1,\ldots,p_n)$ of $S_f$, it was constructed a compact neighborhood $W$ of $p$ such that $f^{-1}(W)$ is compact, a contradiction. 
In such construction, the mentioned Lemma 8 was used to position $S_f$ in a suitable way such that the specific hyperplanes $\{x_1=p_1\}$, $\{x_2=p_2\}$, $\ldots$, $\{x_{n-2}=p_{n-2}\}$ are transversal to the tangent space $T_pS_f$ at the point $p$. 
Here we observe that this construction can be done without repositioning $S_f$, by simply choosing the hyperplanes  $\{x_{i_1}=p_{i_1}\}$, $\{x_{i_2}=p_{i_2}\}$, $\ldots$, $\{x_{i_{n-2}}=p_{i_{n-2}}\}$ that are transversal to $T_pS_f$, with $S_f$ on its original position. To do this it is enough to take a basis $\{v_1,\ldots,v_{n-2}\}$ of $T_pS_f$ and complete this to a basis $\{v_1,\ldots,v_{n-2},e_{i_{n-1}},e_{i_n}\}$ of $\R^n$, where $e_{i_{n-1}}$ and $e_{i_n}$ are elements of the canonical basis $\{e_1,\ldots,e_n\}$ of $\R^n$. Now, take $\{i_1,i_2,\ldots,i_{n-2}\}\subset \{1,2,\ldots,n\}\setminus\{i_{n-1},i_n\}$ such that $i_1<i_2<\cdots<i_{n-2}$. Then one can follow the same reasoning of the original proof of \cite[Theorem 1]{MV-S} by using $\{x_{i_1}=t_{i_1}\}$ in the place of $\{x_1=t_1\}$, $\{x_{i_2}=t_{i_2}\}$ in the place of $\{x_2=t_2\}$, and so on until $\{x_{i_{n-2}}=t_{i_{n-2}}\}$ in the place of $\{x_{n-2}=t_{n-2}\}$.
\end{proof}

In what follows we continue with the discussion of no invariance of properties by composing through the left. 
In corollaries 2.4 and 2.5 of \cite{B}, Balreira generalized to arbitrary finite dimensions the previously mentioned results on global injectivity and invertibility given by Sabatini in \cite[corollaries 2.1 and 2.3]{S} in dimension two. 

Denote by $\wedge$ the wedge product of vectors in $\R^n$ and by $\pi_j(H)$ the $j$-th homotopy group of a topological space $H$. 
Theorem 2.3 of \cite{B} is the Balreira's technical tool to prove those global injectivity and invertibility results:

\begin{balreira}\label{t:bal2.3} Let $n\geq 2,$ $f=(f_1,\ldots,f_n)\colon\R^n\to\R^n$ be a local diffeomorphism,  $k$ an integer with $1\leq k\leq n$, and $H$ an affine subspace of codimension $k$.  Assume that
	$$
	\int_{0}^{\infty}\inf_{\|x\|=r}\frac{ \|\bigwedge_{1\leq j\leq n} \nabla f_j(x) \|}{ \|\bigwedge_{\stackrel{1\leq j\leq n}{j\neq i}} \nabla f_j(x) \|}dr=\infty, \leqno(2.5)
	$$
	for each $i=1,\ldots,k$. Then $f^{-1}(H)$ is non-empty and $\pi_j(f^{-1}(H))=0$ for $j=0,1,\ldots,n-k$.  In particular, $f^{-1}(H)$ is non-empty and connected.
\end{balreira}

In the proof of Theorem 2.3 in \cite{B}, there is the following claim: 

\begin{quotation}\emph{Without lost of generality, we may also assume that the subspace $H$ is parallel to the last $n-k$ axis. Indeed, consider a rotation $A \in SO(n)$ so that $A(H)$ is parallel to the last $n-k$ axis. We then establish the result for the local diffeomorphism $A\circ f \colon \R^n \to \R^n$. In view of the above geometric interpretation of $(2.5)$ in terms of parallelepipeds, the quantity given by $(2.5)$ will remain unchanged. 
} \end{quotation}

But the following example shows that, in general, condition (2.5) is not invariant by rotations through the left, even for $n=2$. 
Since such results are not for semi-algebraic maps, we present an analytical examples for simplicity. 
But if instead of $e^x$ we take $x+\sqrt{x^2+1}$ below, we would have a semi-algebraic example. 
\begin{ex}\label{ex:1} 
	Let $f = (f_1, f_2)\colon \R^2\to\R^2$ be defined by $f({x_1},{x_2})=({x_1},e^{x_2})$.  
	Since $|\det(Df(x))|=\|\nabla f_1(x)\wedge\nabla f_2(x)\|= \|\nabla f_2(x)\|=e^{x_2}$, it follows that $f$ is a $C^\infty$ local diffeomorphism and 
	$$
	\int_{0}^{\infty}\inf_{\|x\|=r}\frac{\|\nabla f_1(x)\wedge \nabla f_2(x)\|}{ \|\nabla f_2(x)\|}\, dr=\int_{0}^{\infty}\, dr=\infty, 
	$$
	showing that $f$ satisfies (2.5) for $k=i=1$. 
	Given
	$$
	A=\frac{\sqrt{2}}{2}
	\left(
	\begin{array}{cc}
	1 & -1     \\
	1  & 1    
	\end{array}
	\right)\in SO(2), 
	$$
	define $g(x) = (g_1,g_2)(x) = (A\circ f)(x)=\sqrt{2}/2\left({x_1} - e^{x_2},{x_1} + e^{x_2}\right)$. 
	We have $|\det(Dg(x))|=\|\nabla g_1(x)\wedge\nabla g_2(x)\| = e^{x_2}$ and $\|\nabla g_i(x)\| =\sqrt{1 + e^{2x_2}}/\sqrt{2}$, which imply 
$$
\int_{0}^{\infty}\inf_{\|x\|=r}\frac{\|\nabla g_1(x)\wedge \nabla g_2(x)\|}{ \|\nabla g_i(x)\|}\, dr \leq \int_0^\infty \frac{e^{-r}}{\sqrt{1+e^{2 r}}}dr \leq 1
$$
for $i=1,2$. Hence $g$ does not satisfy (2.5). 
\end{ex}

In fact Theorem 2.3 of \cite{B} is not valid as stated, because of the following example. 

\begin{ex}\label{ex:2}
Let $q=(q_1,q_2): \R^2 \to \R^2$ be a $C^\infty$ local diffeomorphism and define the local diffeomorphism $f = (f_1, f_2, f_3): \R^3 \to \R^3$ by 
$$
f(x_1,{x_2},{x_3}) = \left({x_3}, q(x_1, x_2)\right). 
$$ 
We have $\|\nabla f_1(x)\wedge \nabla f_2(x)\wedge \nabla f_3(x)\|=\|\nabla f_2(x)\wedge \nabla f_3(x)\|= |\det Df(x)|$. 
Hence 
$$ 
\int_{0}^{\infty}\inf_{\|x\|=r}\frac{\|\nabla f_1(x)\wedge \nabla f_2(x)\wedge \nabla f_3(x)\|}{ \|\nabla f_2(x)\wedge \nabla f_3(x)\|}\, dr=\int_{0}^{\infty}\, dr=\infty, 
$$
and so condition (2.5) is satisfied for $k = i = 1$. 
Now letting $H \subset \R^3$ be the subspace of codimension $1$  spanned by $e_1=(1,0,0)$ and $e_ 2=(0,1,0)$, we have 
$$
f^{-1}(H) = q_2^{-1}(0) \times \R. 
$$ 
If $q_2^{-1}(0)$ is not connected, then $f$ is a counterexample to Theorem 2.3 of \cite{B} above. 
Take for instance $q(x) = \left(e^{x_1} x_2, e^{x_1} (1 - x_2^2)\right)$. 
\end{ex}

Removing the above statement from the proof of Theorem 2.3 of \cite{B}, the following result stays proved by Balreira in \cite{B}:

\begin{teo}\label{novo} Let $n\geq 2,$ $f=(f_1,f_2,\ldots,f_n)\colon\R^n\to\R^n$ be a local diffeomorphism,  $k$ an integer with $1\leq k\leq n$, $H$ an affine subspace of codimension $k$ \textbf{parallel to the last $n-k$ axes}.  Assume that 
$$
\int_{0}^{\infty}\inf_{\|x\|=r}\frac{ \|\bigwedge_{1\leq j\leq n} \nabla f_j(x) \|}{ \|\bigwedge_{\stackrel{1\leq j\leq n}{j\neq i}} \nabla f_j(x) \|}dr=\infty, \leqno(2.5)
$$
for each $i=1,\ldots,k$. Then $f^{-1}(H)$ is non-empty and $\pi_j(f^{-1}(H))=0$ for $j=0,1,\ldots,n-k$.  In particular, $f^{-1}(H)$ is non-empty and connected.
\end{teo}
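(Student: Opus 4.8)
The plan is to recognize $f^{-1}(H)$ as a level set of the first $k$ coordinates of $f$ and then run, on those $k$ coordinates, the ``peeling--off'' bootstrap already underlying Proposition \ref{pro:fibration2} and condition (e$_h$). Since $H$ has codimension $k$ and is parallel to the last $n-k$ axes, its direction space is exactly $\mathrm{span}(e_{k+1},\dots,e_n)$, so the $k$ independent linear equations cutting out $H$ involve only $x_1,\dots,x_k$ and, being independent, fix $(x_1,\dots,x_k)=(c_1,\dots,c_k)=:c$. Hence, writing $F_k=(f_1,\dots,f_k)$, we have $f^{-1}(H)=F_k^{-1}(c)$, and it suffices to prove that $F_k^{-1}(c)$ is nonempty with $\pi_j(F_k^{-1}(c))=0$ for $j=0,\dots,n-k$ — in fact the argument below shows it is contractible.

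I would set $W_1=\R^n$ and inductively $W_{i}=(f_{i-1}|_{W_{i-1}})^{-1}(c_{i-1})=F_{i-1}^{-1}(c_1,\dots,c_{i-1})$ for $i=2,\dots,k+1$, proving by induction on $i$ that each $W_i$ is a nonempty contractible submanifold of $\R^n$. The inductive step rests on two facts about $h_i:=f_i|_{W_i}$. First, $\nabla h_i$ never vanishes, because $\nabla h_i(x)$ is the orthogonal projection of $\nabla f_i(x)$ onto $T_xW_i=\mathrm{span}\{\nabla f_1(x),\dots,\nabla f_{i-1}(x)\}^{\perp}$, which would be zero only if $\nabla f_i(x)\in\mathrm{span}\{\nabla f_1(x),\dots,\nabla f_{i-1}(x)\}$, impossible since $f$ is a local diffeomorphism. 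Second, and this is the crux, $\|\nabla h_i(x)\|=d\bigl(\nabla f_i(x),\mathrm{span}\{\nabla f_1(x),\dots,\nabla f_{i-1}(x)\}\bigr)\ge d\bigl(\nabla f_i(x),\mathrm{span}\{\nabla f_j(x):j\neq i\}\bigr)=\dfrac{\|\bigwedge_{1\le j\le n}\nabla f_j(x)\|}{\|\bigwedge_{1\le j\le n,\,j\neq i}\nabla f_j(x)\|}$, where the inequality holds because $\{1,\dots,i-1\}\subseteq\{1,\dots,n\}\setminus\{i\}$ and the distance to a larger subspace is smaller, and the final equality is the ``volume equals base area times height'' identity for the wedge quantity. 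Consequently $\int_0^\infty\inf_{x\in W_i,\,\|x\|=r}\|\nabla h_i(x)\|\,dr\ge\int_0^\infty\inf_{\|x\|=r}\frac{\|\bigwedge_{1\le j\le n}\nabla f_j(x)\|}{\|\bigwedge_{1\le j\le n,\,j\neq i}\nabla f_j(x)\|}\,dr=\infty$ by hypothesis (2.5) for the index $i$.

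With this in hand, Lemmas \ref{lem:1} and \ref{lem:2} — applied on the submanifold $W_i$, exactly as in the justification of condition (e$_h$) in Section \ref{sec:results} — show that $\nabla h_i/\|\nabla h_i\|^2$ is complete on $W_i$ and that $B(h_i)=\vazio$, so $h_i\colon W_i\to\R$ is a (surjective) locally trivial fibration; since $\R$ is contractible, Proposition \ref{pro:eq} upgrades this to a trivialization $W_i\cong W_{i+1}\times\R$. As $W_i$ is contractible by the inductive hypothesis, $W_{i+1}$ is a nonempty submanifold with the homotopy type of $W_i$, hence contractible. Iterating for $i=1,\dots,k$ gives that $W_{k+1}=F_k^{-1}(c)=f^{-1}(H)$ is nonempty and contractible, which yields the stated conclusion. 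The step I expect to be the main obstacle is the bookkeeping around the intermediate submanifolds: checking that the restricted maps $h_i$ genuinely satisfy the hypotheses of Lemmas \ref{lem:1}--\ref{lem:2} (legitimate, since this is precisely the mechanism behind condition (e$_h$)) and verifying the distance/wedge inequality carefully enough that (2.5) for the index $i$ is exactly the input needed at stage $i$; everything else is the standard fibration bootstrap of Section \ref{sec:results}.
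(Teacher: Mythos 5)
Your proof is correct and is essentially the argument the paper relies on: the paper's ``proof'' is just the observation that Balreira's original proof of \cite[Theorem 2.3]{B}, with the illegitimate rotation step deleted, establishes exactly this axis-parallel version, and that surviving argument is the same peeling-off induction you describe --- identify $f^{-1}(H)$ with $F_k^{-1}(c_1,\ldots,c_k)$, bound the gradient of each restricted $f_i$ from below by the wedge quotient in (2.5) via the ``volume equals base times height'' identity and the monotonicity of distance under inclusion of subspaces, and apply the completeness and fibration lemmas on the successive level sets. The only point worth flagging is that Lemma \ref{lem:1} is stated for vector fields on $\R^n$, so its use on the closed submanifolds $W_i$ requires the routine extension to that setting, which the paper itself already takes for granted in its treatment of condition (e$_h$).
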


To obtain the injectivity of $f$ in corollaries 2.4, 2.5 and 2.6 in \cite{B}, Balreira used his Theorem 2.3 to conclude that the pre-image of a line is connected, which is not proved in his paper by the remarks above. 
But Theorem \ref{novo} is sufficient to prove the injectivity of $f$ in corollaries 2.4, 2.5 and 2.6 of \cite{B}. 
In fact, when $k=n-1$, it follows from Theorem \ref{novo} that the pre-images of lines parallel to the $x_n$ axis are connected. 
This is equivalent to say that the level sets of the submersion $(f_1,f_2,\ldots,f_{n-1})\colon\R^n\to\R^{n-1}$ are connected. 
So the injectivity of $f$ follows from the fact that $f_n$ is monotone on each of such connected pre-images, which is a consequence of the local injectivity of $f$.

\end{document}